\documentclass{article}
\usepackage{amssymb,amsmath}
\usepackage{amsthm}
\usepackage{hyperref}
\usepackage{amscd}
\title{Mod $p$ Monodromy of Cyclic Covers of the Projective Line}
\author{Stepan Nesterov}
\newtheorem{theorem}{Theorem}
\newtheorem{proposition}[theorem]{Proposition}
\newtheorem{lemma}{Lemma}

\theoremstyle{definition}

\newtheorem{definition}{Definition}[section]

\newtheorem*{notation}{Notation}
\newtheorem*{remark}{Remark}
\DeclareMathOperator{\Gal}{\mathrm{Gal}}

\DeclareMathOperator{\GL}{\mathrm{GL}}
\DeclareMathOperator{\PGL}{\mathrm{PGL}}

\DeclareMathOperator{\PSL}{\mathrm{PSL}}
\DeclareMathOperator{\U}{\mathrm{U}}
\DeclareMathOperator{\SU}{\mathrm{SU}}
\DeclareMathOperator{\SlU}{\mathrm{S}_{\mathit{l}}\mathrm{U}}
\DeclareMathOperator{\SlL}{\mathrm{S}_{\mathit{l}}\mathrm{L}}
\DeclareMathOperator{\PSU}{\mathrm{PSU}}
\DeclareMathOperator{\SL}{\mathrm{SL}}
\DeclareMathOperator{\Sp}{\mathrm{Sp}}

\DeclareMathOperator{\et}{\mathrm{\acute{e}t}}
\begin{document}
	\maketitle
	\begin{abstract}
		In this paper, we prove a big monodromy theorem for the monodromy of cyclic coverings of projective line for cohomology with $\mathbf{F}_p$-coefficients. This is a direct generalization of the results of Achter and Pries, where such a theorem is proved for cyclic coverings of degree $2$ and $3$. Instead of generalizing their methods, we adapt the proof of the analogous theorem for integral cohomology. In our subsequent work, we will apply this theorem to construct in infinitely many cases Galois extensions of $\mathbf{Q}$ with Galois group $\PSL(n,q)$ and $\PSU(n,q)$, where $q$ can be an arbitrarilty large prime power.
	\end{abstract}
	\section{Introduction}
	Over $\mathbb{C}$, cyclic branched coverings of $\mathbf{P}^1$ can be explicitly constructed as smooth proper models of affine curves given by equations of the form $y^l = \prod_{i=0}^n (x-a_i)^{k_i}$ for some distinct scalars $a_i \in \mathbb{C}$, and integers $k_i \in \{1, 2, \ldots, l-1 \}$. Let us denote such a covering by $X_{\bar{a}, \bar{k}} \to  \mathbf{P}^1$. The tuple $(k_0, k_1, \ldots, k_n)$ will be called the \emph{monodromy vector} and fixed throughout the argument. The set of all tuples $(a_0, a_1, \ldots, a_n)$ of distinct points of $\mathbb{C}$ will be called \emph{the configuration space} and denoted by $\mathcal{C}_n$. It is clear that all the cyclic branched coverings of $\mathbf{P}^1$ with the monodromy vector $\bar{k}$ can be considered as fibers of a universal family $\mathcal{X}_{\bar{k}} \to \mathcal{C}_n$. It is well-known that the fundamental group of $\mathcal{C}_n$ is canonically isomorphic to the Artin's pure braid group $P_{n+1}$ of pure braids on $n+1$ strands, and so we obtain a representation of $P_{n+1}$ on the cohomology group of any fiber of the universal cyclic branched covering. \par 
	The case of the cohomology group with integral coeffients was studied in \cite{cyclic}, where the following result was obtained:
	\begin{theorem} \cite[Theorem 2]{cyclic}
		If $l \geq 3$, $n \geq 2l$, then the image of the monodromy representation of $P_{n+1}$ on $H^1(X_{\bar{a}}; \mathbf{Z})$ is a finite index subgroup in $\U_{n-1}(\mathbf{Z}[\cos \frac{2 \pi}{l}])$.
	\end{theorem}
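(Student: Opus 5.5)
The plan follows the strategy of \cite{cyclic}: realize the monodromy as generated by complex reflections, then use arithmeticity of groups generated by reflections. The cohomology $H^1(X_{\bar a};\mathbf{Z})$ carries an action of the deck group $\mu_l$. The relevant part of the cohomology will be taken as a module over $\mathbf{Z}[\zeta_l]$; one should replace it by the quotient by the invariant part, or restrict to the primitive eigenspaces when $l$ is not prime. This gives a free $\mathbf{Z}[\zeta_l]$-module $V$ of rank $n-1$. The intersection form, combined with the $\mu_l$-action, yields a Hermitian form $h$ on $V$ with respect to the CM extension $\mathbf{Q}(\zeta_l)/\mathbf{Q}(\cos\frac{2\pi}{l})$. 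Since the monodromy commutes with deck transformations and preserves the intersection form, its image lies in $\U(V,h)=\U_{n-1}(\mathbf{Z}[\cos\frac{2\pi}{l}])$.

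The generators of $P_{n+1}$ are the full twists $A_{ij}$ of the pair of branch points $a_i,a_j$. Take a path $\gamma$ from $a_i$ to $a_j$. Its lift to $X_{\bar a}$ is a chain whose $\mu_l$-orbit generates a vanishing class $\delta_{ij}\in V$. A local computation on the model $y^l=x^{k_i}(x-1)^{k_j}$ shows how the twist acts. When $k_i+k_j\not\equiv 0 \pmod l$, $A_{ij}$ acts on $V$ by a complex reflection $v\mapsto v-(1-\zeta^{k_i+k_j})\,h(v,\delta_{ij})\delta_{ij}/h(\delta_{ij},\delta_{ij})$, up to normalization. When $k_i+k_j\equiv 0$, it acts by a transvection. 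The classes $\delta_{i,i+1}$ for consecutive points generate $V$, and the resulting Gram matrix of $h$ is tridiagonal with entries computable from the $k_i$ (a Deligne--Mostow type calculation). Using $n\ge 2l$, I would choose a subcollection of branch points: by pigeonhole, many $k_i$ share a residue, and one can then find pairs where the images of the $A_{ij}$ are honest reflections or transvections of infinite order.

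The main step is to show that the group $\Gamma$ generated by these reflections and transvections has finite index in $\U(V,h)$. I would use that $h$ is indefinite for $n\ge 2l$, since both $\zeta$ and $\bar\zeta$ eigenspaces have large dimension. First, a unipotent transvection in $\Gamma$ lies in the unipotent radical of a parabolic. Conjugating it by other elements of $\Gamma$ should produce a Zariski-dense group: the group is generated by reflections with irreducible $V$, so it is irreducible, and Zariski density then follows via a classification of irreducible groups generated by reflections and transvections. The same conjugates also give finite-index subgroups in the integer points of opposite unipotent radicals of a $\mathbf{Q}$-rank $\ge 1$ form of $\U(V,h)$, after restriction of scalars from $\mathbf{Q}(\cos\frac{2\pi}{l})$. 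The theorems of Tits and Vaserstein, or Venkataramana's result that a Zariski-dense subgroup containing finite-index subgroups of opposite unipotent integral groups is arithmetic (real rank $\ge 2$), then give finite index.

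The expected obstacle is verifying the hypotheses of the arithmeticity criterion. One must produce enough unipotents in opposite horospherical subgroups and check the rank condition, which is where $n\ge 2l$ and $l\ge 3$ enter; for $l=2$ the group is symplectic and is handled separately. I would try to reduce this by induction on $n$ via the inclusion of configuration spaces $\mathcal{C}_{n-1}\subset\mathcal{C}_n$ (a degeneration merging points). This would give an embedded copy of a smaller unitary monodromy group, reducing to a base case of small rank that is checked explicitly.
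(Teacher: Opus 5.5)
Your outline (monodromy inside $\U(h)$, twists acting as generalized reflections, Zariski density, integral unipotents in opposite horospherical subgroups, Venkataramana's arithmeticity criterion, induction on the number of strands) has the same architecture as the argument of \cite{cyclic} that this paper summarizes. You also correctly locate the difficulty in producing the unipotents. However, you do not supply the idea that overcomes it, and the mechanism you sketch does not work.

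By Lemma~\ref{explicit} (and conjugation in the full braid group), $A_{ij}$ is a generalized reflection with nontrivial eigenvalue $\zeta^{k_i+k_j}$, so it is unipotent only when $k_i+k_j\equiv 0 \pmod l$. Pigeonhole on the residues of the $k_i$ only gives pairs with $k_i=k_j$, i.e.\ reflections of finite order. For $y^l=\prod_i(x-a_i)$ no generator is unipotent at all. Even granting one transvection along an isotropic $v$, conjugation by $\Gamma$ only moves its direction. Conjugation by the stabilizer of $\langle v\rangle$ keeps it in the center of the Heisenberg group $U_v$, the unipotent radical of that stabilizer. Nothing in your sketch produces elements of $U_v(\mathcal{O})$ whose images fill a lattice in $U_v/Z(U_v)\cong v^{\perp}/\langle v\rangle$. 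Yet a finite-index subgroup of $U_v(\mathcal{O})$, and of an opposite radical, is exactly what the arithmeticity criterion requires. Knowing abstractly that $h$ is indefinite or isotropic does not tell you that $\Gamma$ meets any such radical.

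The missing idea is to apply pigeonhole to the partial sums $k_0+\dots+k_m$. After reordering, this gives a proper block of consecutive branch points whose exponents sum to $0$ mod $l$. On the corresponding coordinate subspace $\langle\varepsilon_0,\dots,\varepsilon_{m-1}\rangle$ the form is degenerate, with kernel spanned by the explicit isotropic vector $v$ recalled after Proposition~\ref{irreducibility}.
\begin{itemize}
\item The braids on those strands act through the non-reductive isometry group of this degenerate form. Proposition~\ref{nontriviality1} gives a nontrivial element $[s_0^2,\Delta'^2]$ of its unipotent radical.
\item That radical is identified (up to duality), equivariantly for conjugation by the braids on strands $1,\dots,m$, with the smaller Gassner representation. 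Irreducibility then forces the image to meet it in a finite-index subgroup; Section~\ref{degen} is the mod $p$ version.
\item Adding one more point with $t_{m+1}\neq 1$, these elements lift to the Heisenberg-type matrices (\ref{extension}) of Proposition~\ref{next_transvect}, and commutators of lifts fill out the center. This is what gives $\Gamma\cap U_v(\mathcal{O})$ of finite index.
\item An opposite radical is then $\gamma U_v\gamma^{-1}$ for any $\gamma\in\Gamma$ with $h(v,\gamma v)\neq 0$, which exists by irreducibility.
\end{itemize}
Your ``base case of small rank checked explicitly'' is precisely this step. Your induction also needs what the paper secures by taking the block maximal: the intermediate hyperplanes must stay nondegenerate, i.e.\ $t_0\cdots t_{m+i}\neq 1$. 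Otherwise the smaller image lies in a non-reductive isometry group rather than in the unitary group of a nondegenerate hyperplane, and the hyperplane argument does not apply.
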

	We will modify this result to work for the cohomology group with mod $p$ coefficients instead. For the convenience of reader, let us recall what the possible behaviour of the reduction mof $p$ of the unitary group is. \par 
	A unitary group over the ring $R=\mathbf{Z}[\cos \frac{2 \pi}{l}]$ is defined as follows: We take a quadratic algebra $\mathbf{Z}[\zeta_l]$ over $R$, with complex conjugation as an involution, a finite free module over $\mathbf{Z}[\zeta_l]$ and a nondegenerate hermitian form with respect to this involution. The unitary group is the isometry group of this form. Now let us take a prime ideal $\mathfrak{p}$ of $R$ lying over a rational prime $p$. 
	\begin{notation} For the rest of the paper, we fix two distinct odd primes $p$, and $l$. We will denote by $q$ the order of the residue field of $\mathbf{Z}[\cos \frac{2 \pi}{l}]$ at prime above $p$. Explicitly, $q=p^{\frac{\mathrm{ord} (p \mod l)}{2}}$, if $\frac{l-1}{\mathrm{ord (p \mod l)}}$ is odd, and $q=p^{\mathrm{ord} (p \mod l)}$, if $\frac{l-1}{\mathrm{ord (p \mod l)}}$ is even. 
	\end{notation}
	In the first case, $\mathfrak{p}$ remains inert in $\mathbf{Z}[\zeta_l]$, and $\mathbf{Z}[\zeta_l]/\mathfrak{p} = \mathbf{F}_{q^2}$, with complex conjugation inducing the involution $x \mapsto x^q$. If the reduction of the original hermitian form is nondegenerate, the reduction of the unitary group is the unitary group $\U(n-1,q)$ over the finite field $\mathbf{F}_q$ (Note that we follow the notation for unitary groups where $\mathbf{F}_q$ denotes the analogue of $\mathbf{R}$, not $\mathbf{C}$). \par
	In the second case, $\mathfrak{p}$ splits into $\mathbf{Z}[\zeta_l]$ into two primes $\mathfrak{q}$ and $\bar{\mathfrak{q}}$. Then we have $\mathbf{Z}[\zeta_l]/\mathfrak{p} = \mathbf{F}_q \oplus \mathbf{F}_q$, with the complex conjugation inducing an involultion which swaps the two factors. In this case, the reduction of the original finite free module becomes a pair of $\mathbf{F}_q$-vector spaces, and the reduction of the original hermitian form induces a pairing between these two vector spaces. Therefore, the reduction of the unitary group becomes $\GL(n-1,q)$.
	\begin{notation}
		Let $\mathbf{E}_q$ denote $\mathbf{Z}[\zeta_l]/\mathfrak{p}$ in either of the two cases: $\mathbf{E}_q=\mathbf{F}_{q^2}$ if $\frac{l-1}{\mathrm{ord (p \mod l)}}$ is odd, and $\mathbf{E}_q = \mathbf{F}_q \oplus \mathbf{F}_q$ if $\frac{l-1}{\mathrm{ord (p \mod l)}}$ is even. We will sometimes refer to these as 
		the unitary case', and 'the linear case' of the problem. No matter the case we are in, we can talk about the reduction of the integral unitary group being 'the unitary group over $\mathbf{E}_q$'.
	\end{notation}
	Let $X$ be a branched cyclic covering of $\mathbf{P}^1$ of some odd prime degree $l$. Then the etale cohomology $H^1_{\et}(X;\mathbf{F}_p)$ acquires a structure of an $\mathbf{F}_p[T]/(T^l - 1)$-module, with a fixed choice of a nontrivial deck transformation $\varphi : X \to X$ acting as $T$. This action induces a decomposition $H^1_{\et}(X; \mathbf{F}_p) = \ker (T-1) \oplus \ker \Big( \frac{T^l-1}{T-1} \Big)$, and moreover, $\ker(T-1)$ can be identified with $H^1_{\et}(X/\langle \varphi \rangle;\mathbb{Z}_p)$. However, $X / \langle \varphi \rangle \cong \mathbf{P}^1$, so this first summand vanishes, and hence $H^1_{\et}(X; \mathbf{F}_p)$ becomes an $\mathbf{F}_p[T] / (T^{l-1} + \ldots + T + 1) \cong \mathbf{F}_q^{\oplus k}$-module, where $q = p^{\mathrm{ord} (p \mod l)}$, $k = \frac{l-1}{\mathrm{ord} (p \mod l)}$. We will consider the components $H^1_{\et}(X; \mathbf{F}_p)^{\pi_j = 0}$, where the kernel of the corresponindg map $\pi_j : \mathbf{F}_p[T] / (T^{l-1} + \ldots + T + 1) \to \mathbf{F}_q$ acts trivially. \par  
	\begin{theorem} \label{main}
		If $n \geq l + 1$, then the image of the monodromy representation of $P_{n+1}$ on $H^1(X_{\bar{a}}; \mathbf{F}_p)^{\pi_j = 0}$ is either the group $\SlU(n-1,q) := \{A \in \GL(n-1,q^2) : A \cdot \mathrm{Fr}_q(A)^t = 1, \det A \in \mu_l \}$ or $\SlL(n-1,q) := \{A \in \GL(n-1,q) : \det A \in \mu_l\}$, depending on whether or not $\frac{l-1}{\mathrm{ord (p \mod l)}}$ is odd or even.
	\end{theorem}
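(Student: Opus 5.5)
We will argue by induction on $n$, in the style of the integral argument of \cite{cyclic}, working throughout with the $\mathbf{E}_q$-module $V_n := H^1(X_{\bar{a}};\mathbf{F}_p)^{\pi_j=0}$ of rank $n-1$ and its reduced hermitian pairing.

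\textbf{Upper bound.} The image lies in the claimed group. The cup product pairing is equivariant for the deck transformation $\varphi$. Hence on $V_n$ it induces either a nondegenerate hermitian form over $\mathbf{F}_{q^2}$ (unitary case) or a perfect pairing between $V_n$ and the conjugate component (linear case). In particular the image lies in $\U(n-1,q)$, respectively $\GL(n-1,q)$. For the determinant, we use the standard generators of $P_{n+1}$, namely the full twists $A_{ij}$ of $a_i$ around $a_j$. The monodromy of $A_{ij}$ is a complex reflection (a ``transvection-like'' element) along the vanishing cycle of the arc joining $a_i,a_j$. Its eigenvalue on the relevant eigenline is a power of $\zeta_l$ determined by $k_i+k_j$; when $k_i+k_j\equiv 0 \pmod l$ it is instead a unipotent transvection. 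So every generator has determinant in $\mu_l$.

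\textbf{Lower bound.} We first check that the reduced form is nondegenerate. The discriminant of the integral form is a power of $(1-\zeta_l)$, and this is a unit modulo $\mathfrak{p}$ since $p\neq l$. We then embed the $\pi_j$-components of the smaller configuration: collide two points $a_{n-1},a_n$, or restrict to braids not involving the last strand. This realizes $V_{n-1}$ (plus a line) inside $V_n$, in such a way that the subgroup generated by braids on the first $n$ strands acts on $V_{n-1}$ through the monodromy group for $n-1$. Since every vanishing cycle is nonzero mod $p$, the image $G_n$ is generated by complex reflections of order $l$ (or transvections) in an irreducible representation. Irreducibility can be shown via the vanishing cycles spanning $V_n$ and forming a connected Dynkin-type graph under the pairing. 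We then invoke the classification of finite irreducible linear groups over finite fields generated by pseudoreflections, due to Wagner, Zalesskii and Serezhkin, in dimension $\geq 3$. Such a group either contains $\SU(n-1,q')$ or $\SL(n-1,q')$ for a subfield $\mathbf{F}_{q'}$, or lies in a short list of imprimitive or exceptional groups. Imprimitive (monomial) groups are excluded because $G_n$ contains, via the inductive embedding, the full group for $n-1$, which is already primitive. Exceptional groups are excluded because they only occur in bounded dimension.

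\textbf{Main obstacle.} The hardest part is the base case together with the field of definition. We must show that the traces of elements of $G_n$ generate all of $\mathbf{F}_{q^2}$ (resp. $\mathbf{F}_q$), and not a proper subfield; for this we compute the trace of a product of two reflections $A_{ij}A_{jk}$, which involves $\zeta_l^{k_i}$ and hence generates $\mathbf{F}_p(\zeta_l)$. We must also verify the small case $n=l+1$ directly. I expect the hypothesis $n\geq l+1$ is exactly what ensures enough reflections with different eigenvalues $\zeta_l^{k_i+k_j}$ to exist and to produce irreducibility and primitivity. Once $G_n$ is known to contain $\SU(n-1,q)$ (resp. $\SL(n-1,q)$), the determinant computation from the upper bound shows that the determinants of the reflections fill out $\mu_l$, giving $\SlU(n-1,q)$ or $\SlL(n-1,q)$ exactly.
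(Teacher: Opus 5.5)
Your route (classifying the image via the Wagner/Zalesskii--Serezhkin results on groups generated by pseudoreflections, then inducting on $n$) differs from the paper's. As written it has a genuine gap: the base case $n=l+1$, where all the difficulty lies, is never proved, and both of your exclusion mechanisms are unavailable there.

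You exclude imprimitive groups only through the inductive embedding. That part can be repaired. An irreducible group generated by pseudoreflections of odd order (here homologies of order $l$ and transvections of order $p$) is automatically primitive, since a pseudoreflection that moves a block must swap two blocks, exactly as in the first paragraph of Lemma~\ref{next_prim}.

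The exceptional groups are the real problem, because ``they only occur in bounded dimension'' says nothing about the base case. For $l=3$ the first cases $n=4,5$ live in dimensions $3$ and $4$. That is exactly where the Shephard--Todd groups $G_{25}$ and $G_{32}$ sit: they are generated by homologies of order $3$, preserve a hermitian form over $\mathbf{Z}[\zeta_3]$, and reduce mod $\mathfrak{p}$ into the unitary group over $\mathbf{E}_q$. Nothing in your argument distinguishes the monodromy image from these. Your expectation that $n\ge l+1$ yields ``enough reflections with different eigenvalues'' is a hope, not an argument. A secondary issue: in the inductive step the smaller configuration must itself be nondegenerate and satisfy the hypothesis at $n-1$, and plain truncation fails this, e.g.\ when all $k_i$ are equal and $l\mid n$.

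The missing idea is how the paper handles the base case without any classification of pseudoreflection groups.
\begin{itemize}
\item The hypothesis on $n$ enters through a pigeonhole argument on the partial products $t_0t_1\cdots t_i\in\mu_l$. After reordering, this gives a subsequence of length $\ge 3$ with product $1$.
\item The corresponding coordinate subspace carries a degenerate hermitian form. The commutator $[s_0^2,\Delta'^2]$ of Proposition~\ref{nontriviality1} is a nontrivial element of the unipotent radical of its isometry group.
\item Lemma~\ref{invaddsubgp} (no proper invariant $\mathbf{F}_p$-subspaces) upgrades this single element to the whole radical.
\item Commutators of lifts of these elements to the next Gassner representation give all unitary transvections along an isotropic vector (Proposition~\ref{next_transvect}).
\item Zalesskii's theorem on irreducible groups generated by transvections has only classical conclusions for $q$ odd and dimension $>2$, so no exceptional cases arise. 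Together with primitivity, it gives $\SU(h)$ (Theorem~\ref{next_image}).
\end{itemize}
Only after this does an induction of the kind you describe take over, via Lemma~\ref{hyperplane}.
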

	Our approach is inspired by an analogous theorem from \cite{cyclic}. There, an explicit model for the monodromy representation is provided, which they call a \emph{reduced Gassner representation}. We recall this model and its basic properties in \ref{gassnerbasic}. The main idea for the big monodromy proof is to find enough transvections in the image of monodromy. To do so, we need to pick a proper subsequence $(k_0, \ldots, k_i)$ of the monodromy vector with product $1$. In the explicit model, this subsequence will correspond to a coordinate subspace with the degenerate hermitian form. The isometry group of the degenerate hermitian form is non-reductive, and in the section \ref{degen} we prove that its unipotent radical is contained in the monodromy image. Then, in the section \ref{next_degen} we show how to lift the elements obtained on the previous step to automorphisms of the subspace of dimension one bigger and use a theorem of Zalesskii on groups generated by transvections (\cite{zalesskii}) to prove the base case of our big monodromy theorem. The main result then follows by a straighforward inductive argument. \par 
	Our ultimate motivation for this project lies in its applications in inverse Galois theory. The connection between big monodromy theorems and big Galois image theorems is well-known, and the latter is also a well-studied topic (see \cite{abvar}). In order to construct Galois extensions of $\mathbf{Q}$ with Galois groups $\PSL(n,q)$ and $\PSU(n,q)$, we will look at the Galois action on the torsion of the Jacobian of a cyclic cover of the projective line. In order for the Galois image to lie in $\PGL(n,q)$ we need the deck transformation of the curve to be defined over $\mathbf{Q}$, which will only be true under some congruence conditions on $n$. The main difficulty is that because the Weil pairing takes values in $\mu_l$, the arithmetic monodromy image might differ from the geometric monodromy. All of this is a topic of our forthcoming work.
	\section{Gassner representations} \label{gassnerbasic}
	In geometric group theory, the \emph{Gassner representation} is naturally defined as a homomorphism $g^{\mathrm{univ}}_n : P_{n+1} \to \GL_{n+1}(\mathbf{Z}[T_0^{\pm 1}, T_1^{\pm 1}\ldots, T_n^{\pm 1}])$\footnote{Do I need to provide explicit formulas here??}. It is shown in \cite[section 3.5]{cyclic} that the Gasssner representation decomposes as a direct sum of a one-dimensional trivial representation and a \emph{reduced Gassner representation} $\bar{g}_n^{\mathrm{univ}} : P_{n+1} \to \GL_n(\mathbf{Z}[T_0^{\pm 1}, T_1^{\pm 1}\ldots, T_n^{\pm 1}])$. Given a ring $R$ and invertible elements $t_0, \ldots, t_n \in R^{\times}$, let us define a \emph{specialized reduced Gassner representation} as a representation $\bar{g}_n(t_0, \ldots, t_n) : P_{n+1} \to \GL_{n-1}(R)$ obtained from the Gassner representation by a base change along the map $\mathbf{Z}[T_0^{\pm 1}, T_1^{\pm 1}\ldots, T_n^{\pm 1}] \to F$, $T_i \mapsto t_i$. It turns out that we can obtain the monodromy representation on the cohomology of cyclic branched coverings as a specialized reduced Gassner representation: 
	\begin{theorem} 
		Let $\mathfrak{p}$ vary over the set of primes of $\mathbf{Z}[\cos(\frac{2\pi}{l})]$ above $p$. Then the monodromy representation on the cohomology of cyclic branched coverings $X_{\bar{a}, \bar{k}}$ is isomorphic, as an $\mathbf{F}_p[T]/(T^{l-1} + \ldots + 1) \equiv \bigoplus_{\mathfrak{p}} \mathbf{E}_q$-module and a $P_{n+1}$-representation, to $\bigoplus_{\zeta }\bar{g}_n(\zeta^{k_0}, \zeta^{k_1}, \ldots, \zeta^{k_n})$, where $\zeta$ denotes, in the $\mathfrak{p}$-component, the reduction of a fixed primitive $l$-th root of unity mod $\mathfrak{p}$.
	\end{theorem}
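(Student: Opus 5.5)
The statement to prove is the last theorem: the mod $p$ monodromy on $H^1_{\et}(X_{\bar a,\bar k};\mathbf{F}_p)$ is, as an $\mathbf{F}_p[T]/(T^{l-1}+\cdots+1)$-module and a $P_{n+1}$-representation, the sum over $\mathfrak{p}$ of the specialized reduced Gassner representations $\bar g_n(\zeta^{k_0},\ldots,\zeta^{k_n})$. My plan is to deduce this from the integral statement in \cite{cyclic} by base change, rather than redo the topology.

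First, I will recall the integral result of \cite[section 3.5]{cyclic}. There the monodromy on $H^1(X_{\bar a,\bar k};\mathbf{Z})$, viewed as a module over $\mathbf{Z}[T]/(T^{l-1}+\cdots+1)=\mathbf{Z}[\zeta_l]$ via the deck transformation $\varphi$, is identified with $\bar g_n(\zeta_l^{k_0},\ldots,\zeta_l^{k_n})$ over $R=\mathbf{Z}[\zeta_l]$. The underlying construction is as follows:
\begin{itemize}
\item $H_1$ of the punctured curve $X^\circ$ is the $\mathbf{Z}[\zeta_l]$-part of the homology of the $\mathbf{Z}/l$-cover of $\mathbf{P}^1\setminus\{a_0,\ldots,a_n,\infty\}$, i.e.\ twisted homology with local system given by $T_i\mapsto\zeta_l^{k_i}$.
\item Fox calculus on the free group $\pi_1(\mathbf{C}\setminus\{a_i\})$ gives the Gassner matrices.
\item Filling in the punctures cuts the rank from $n$ to $n-1$, giving the reduced Gassner representation.
\end{itemize}
I assume this identification as stated there, including that $H^1(X;\mathbf{Z})$ is free over $\mathbf{Z}[\zeta_l]$ of rank $n-1$. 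Cohomology versus homology is harmless: Poincaré duality identifies them up to the involution $\zeta\mapsto\zeta^{-1}$, which can be absorbed by replacing $\zeta$ by $\zeta^{-1}$, i.e.\ by the choice of root of unity.

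Next comes the passage to $\mathbf{F}_p$. By Artin's comparison theorem, $H^1_{\et}(X;\mathbf{F}_p)\cong H^1(X(\mathbf{C});\mathbf{F}_p)$, compatibly with the $P_{n+1}$-action (monodromy of the local system $R^1f_*$ on $\mathcal{C}_n$) and with $\varphi$. Since $H^2(X;\mathbf{Z})$ is torsion-free, the universal coefficient theorem gives
\[
H^1(X;\mathbf{F}_p)=H^1(X;\mathbf{Z})\otimes\mathbf{F}_p=H^1(X;\mathbf{Z})\otimes_{\mathbf{Z}[\zeta_l]}\mathbf{Z}[\zeta_l]/p,
\]
equivariantly. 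Because $p\neq l$, the ring $\mathbf{Z}[\zeta_l]/p$ is étale over $\mathbf{F}_p$ and decomposes by CRT as $\prod_{\mathfrak{p}}\mathbf{Z}[\zeta_l]/\mathfrak{p}\mathbf{Z}[\zeta_l]=\bigoplus_{\mathfrak{p}}\mathbf{E}_q$, with $\mathfrak{p}$ running over primes of $\mathbf{Z}[\cos\frac{2\pi}{l}]$ above $p$. Tensoring the free module decomposes the representation accordingly.

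Finally, base change is compatible with specialization. Since $\bar g_n(t_0,\ldots,t_n)$ is defined by base change of the universal matrices, each component is $\bar g_n(\bar\zeta^{k_0},\ldots,\bar\zeta^{k_n})$ over $\mathbf{E}_q$, where $\bar\zeta$ is the image of $\zeta_l$. This gives the claimed decomposition.

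The step I expect to be the main obstacle is making sure the integral identification in \cite{cyclic} really is an isomorphism over $\mathbf{Z}[\zeta_l]$, and not merely over $\mathbf{Q}(\zeta_l)$ or up to finite index. A finite-index discrepancy could be divisible by $p$ and would break the mod $p$ reduction. If only a rational statement is available, I would redo the twisted-homology computation directly with coefficients in $\mathbf{Z}[\zeta_l]/p$. That computation is a cellular chain complex argument on a wedge of circles, with coefficients in $\mathbf{E}_q$ where $\bar\zeta^{k_i}\neq1$, and it works verbatim over any ring in which $1-\bar\zeta^{k}$ is a unit. This holds here because $p\neq l$, and it is exactly what is needed for the reduction step to have rank $n-1$.
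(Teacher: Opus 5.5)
\textbf{Your fallback is the paper's proof; your primary route has a gap.} The paper proves this theorem with a one-line remark. Proposition 24 of \cite{cyclic} is stated with $\mathbf{Q}$-coefficients, but its proof uses only that $l$ is invertible in the coefficient ring, so it goes through verbatim with $\mathbf{F}_p$-coefficients. Your fallback in the last paragraph is exactly this argument. Since $\prod_{j=1}^{l-1}(1-\zeta^j)=l$, requiring every $1-\bar\zeta^{k}$ to be a unit is the same as requiring $l$ to be invertible.

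Your primary route quotes an isomorphism over $\mathbf{Z}[\zeta_l]$ that \cite{cyclic} does not provide. You also cannot get it by rerunning that proof over $\mathbf{Z}[\zeta_l]$, because there $1-\zeta_l$ generates the prime above $l$ and is not a unit. Integrality at $l$ is irrelevant to you anyway. An identification over $\mathbf{Z}[\zeta_l,1/l]$, or directly over $\mathbf{Z}[\zeta_l]/p$, suffices, and that is what the fallback gives; your universal-coefficient and CRT steps are then fine. So the fallback should be your main line, not a contingency.

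A side remark on ranks: when $\sum k_i\not\equiv 0 \pmod l$, the $\mathbf{Z}[\zeta_l]$-rank of $H^1(X;\mathbf{Z})$ is $n$, matching the basis $\varepsilon_0,\ldots,\varepsilon_{n-1}$ of Lemma \ref{explicit}. Filling in punctures drops the rank to $n-1$ only when $\infty$ is unbranched.

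\textbf{An alternative that resolves your stated obstacle.} Your worry was a finite-index discrepancy divisible by $p$. This can be removed using only the rational statement, whenever the reduction of the Gassner model is absolutely irreducible. By Proposition \ref{irreducibility}, that holds for instance when $\sum k_i\not\equiv 0 \pmod l$.

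Fix a prime $\mathfrak{P}$ of $\mathbf{Z}[\zeta_l]$ above $p$, and let $\pi$ be a uniformizer of $\mathcal{O}=\mathbf{Z}[\zeta_l]_{\mathfrak{P}}$. Set $L_1=H^1(X;\mathbf{Z})\otimes_{\mathbf{Z}[\zeta_l]}\mathcal{O}$ and let $L_2=\mathcal{O}^n$ be the Gassner lattice. $L_1$ is a lattice because $H^1(X;\mathbf{Z})$ is torsion-free over the Dedekind ring $\mathbf{Z}[\zeta_l]$; freeness is not needed. Both are $P_{n+1}$-stable lattices in isomorphic $\mathbf{Q}(\zeta_l)$-representations.

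Identify the two ambient representations and rescale $L_1$ so that $L_1\subseteq L_2$ but $L_1\not\subseteq\pi L_2$. The image of $L_1$ in $L_2/\pi L_2$ is then a nonzero invariant subspace, hence everything. Nakayama's lemma gives $L_1=L_2$. So the two lattices are homothetic and their reductions mod $\mathfrak{P}$ are isomorphic.

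This differs from the paper's route: it avoids rerunning the topological computation of \cite{cyclic} in characteristic $p$, at the cost of using the irreducibility result proved in this paper.
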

	This is essentially proposition 24 of \cite{cyclic}. We note that even though the result is stated there for $\mathbb{Q}$-coefficients, the proof only uses that $l$ in invertible in the ring of coefficients. \par 
	We denote by $s_i$ the standard generators of the braid group. Then $s_i^2$ is a pure braid. The following result is Lemma 14 from \cite{cyclic}:
	\begin{lemma} \label{explicit}There is a basis  $\varepsilon_0, \cdots,  \varepsilon_{n-1}$ in the reduced Gassner representation $\bar{g}_n^{\mathrm{univ}}$, such that the action of  $s_i^2$ has the matrix form
		\[\begin{pmatrix} 1 & 0 & 0 \\ T_i (1-T_{i+1}) & T_iT_{i+1} & 1-T_i \\ 0
			& 0 & 1 \end{pmatrix} \oplus 1_{n-3} , \] where the $3\times 3$ matrix
		is with respect  to the basis elements $\varepsilon_{i-1},  \varepsilon_i, \varepsilon_{i+1}$ and
		$s_i^2$ acts as identity on the basis elements $\varepsilon_j$ for the
		other indices $j$. In particular, $s_i^2$ are \emph{generalized reflections} in the sense that $s_i^2-1$ has rank $1$.
	\end{lemma}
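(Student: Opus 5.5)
The plan is to realize $\bar{g}_n^{\mathrm{univ}}$ concretely and read off the action of $s_i^2$ in a well-chosen basis. Recall the standard model of the Gassner representation. Let $D$ be the plane punctured at $a_0,\ldots,a_n$ with a basepoint, and let $x_0,\ldots,x_n$ be the standard free generators of $\pi_1(D)$ looping around the punctures. Let $\tilde D\to D$ be the $\mathbf{Z}^{n+1}$-cover attached to $x_j\mapsto T_j$. Then $H_1(\tilde D,\tilde{*})$ is a free $\mathbf{Z}[T_j^{\pm1}]$-module on the lifts $\tilde x_j$, and $P_{n+1}$ acts on it through Artin's action on $F_{n+1}=\pi_1(D)$. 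Equivalently, the matrices are the abelianized Fox Jacobians $\big(\partial\beta(x_j)/\partial x_k\big)|_{x_k\mapsto T_k}$.

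The trivial summand is spanned by the class of the boundary loop $x_0x_1\cdots x_n$, which every pure braid fixes. We identify the complement with the span of the relative classes $\varepsilon_0,\ldots,\varepsilon_{n-1}$ of lifts of the straight arcs $[a_j,a_{j+1}]$. Equivalently, up to normalization, these are the Fox images of $x_j^{-1}$-twisted differences $\tilde x_{j+1}-\tilde x_j$ type combinations, i.e.\ the ``twisted segment'' cycles. The first step is to check that these elements form a $\mathbf{Z}[T^{\pm1}]$-basis of a $P_{n+1}$-stable complement. This is the identification from \cite[section 3.5]{cyclic}, so we only need to fix conventions compatible with it.

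The second step is the local computation. The element $s_i^2$ is the full twist $A_{i,i+1}$, supported in a disk $\Delta$ containing $a_i,a_{i+1}$ and the arc $[a_i,a_{i+1}]$, and meeting no other arc except at the endpoints $a_i$ and $a_{i+1}$. Its Artin action is
\[
x_i\mapsto (x_ix_{i+1})x_i(x_ix_{i+1})^{-1},\qquad x_{i+1}\mapsto (x_ix_{i+1})x_{i+1}(x_ix_{i+1})^{-1},
\]
with $x_j$ fixed for $j\neq i,i+1$. From this we get the following:
\begin{itemize}
\item Any arc disjoint from $\Delta$ is fixed. Hence $\varepsilon_j\mapsto\varepsilon_j$ for $j\notin\{i-1,i,i+1\}$.
\item The arc $[a_i,a_{i+1}]$ is carried to itself, but its lift is translated by the deck transformation of the loop $x_ix_{i+1}$ encircling both endpoints. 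Hence $\varepsilon_i\mapsto T_iT_{i+1}\varepsilon_i$.
\item The arcs $[a_{i-1},a_i]$ and $[a_{i+1},a_{i+2}]$ are dragged once around the twist region. Their images differ from the originals by a multiple of the vanishing cycle $\varepsilon_i$. Applying the Fox derivative to the conjugation formulas gives the coefficients $T_i(1-T_{i+1})$ and $1-T_i$ respectively.
\end{itemize}
Writing these images in the basis yields exactly the displayed $3\times3$ block. The matrix is written with the convention in which the images appear in the middle row, i.e.\ the transpose of the column convention.

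The rank-one claim is then immediate, since $s_i^2-1$ is nonzero only in the single row indexed by $\varepsilon_i$. Alternatively, this is the Picard--Lefschetz picture: $s_i^2-1$ has image contained in the line spanned by the vanishing cycle $\varepsilon_i$.

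The main obstacle is purely bookkeeping. We must fix the orientation of the generators, the direction of the twist, the left/right Fox calculus convention, and the normalization of $\varepsilon_j$, so that the coefficients come out precisely as $T_i(1-T_{i+1})$ and $1-T_i$, rather than some conjugate or inverse variant. It may also require rescaling $\varepsilon_j$ by monomials in the $T$'s. I would handle this by doing the Fox computation once for $n=2$ with $i=1$, fixing the normalizations there, and then observing that the computation is local and identical for every $i$.
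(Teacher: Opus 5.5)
The paper gives no argument of its own here; it imports Lemma 14 of \cite{cyclic}. Your topological route is a reasonable independent strategy, and your three bullets describe the right geometry. But the first step is wrong as stated, and this also breaks your plan for obtaining the coefficients.

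\textbf{The module is misidentified.} You place the arc classes inside the Fox module $H_1(\tilde D,\tilde *)$, as a basis of a $P_{n+1}$-stable complement to the boundary class $r=\tilde x_0+T_0\tilde x_1+\cdots+T_0\cdots T_{n-1}\tilde x_n$. Over $\Lambda=\mathbf{Z}[T_0^{\pm1},\ldots,T_n^{\pm1}]$ no such complement exists. An equivariant projection onto $\Lambda r$ would be an invariant functional $f$. Invariance under the $s_i^2$ alone forces $f(\tilde x_j)=\mu(T_j-1)$ with $\mu\in\Lambda$, and then $f(r)=\mu(T_0\cdots T_n-1)\neq 1$.

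Worse, for $n\ge 2$ even the free quotient $H_1(\tilde D,\tilde *)/\Lambda r$ has no basis in which the $s_i^2$ take the displayed form. With your Artin formulas, $(s_i^2-1)\tilde x_i=T_i\delta_i$ and $(s_i^2-1)\tilde x_{i+1}=-\delta_i$. Here $\delta_i=(1-T_i)\tilde x_{i+1}-(1-T_{i+1})\tilde x_i$ is the class of the commutator $[x_{i+1},x_i]$. So in the quotient the image of $s_i^2-1$ is the free module $\Lambda\bar\delta_i$. In the lemma's form it is $(1-T_i,1-T_{i+1})\varepsilon_i$, a non-principal ideal, so the two modules are not isomorphic. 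The same computation rules out any ``twisted difference'' $\tilde x_{i+1}-m\tilde x_i$ with $m$ a monomial.

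\textbf{Where the arcs actually live.} The lifted arcs do form a basis, but of a different module. Let $Y$ be the disk with small open disks around the punctures removed, and $\partial_p$ the union of the inner boundary circles. The right module is $H_1(\tilde Y,\tilde\partial_p)$. $Y$ retracts onto these circles joined by the $n$ arcs, so the relative cellular complex is free on the lifted arcs in degree $1$ and zero in degree $0$. This module is Lefschetz-dual to $H_1(\tilde D,\tilde *)/\Lambda r$, not a summand of $H_1(\tilde D,\tilde *)$.

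\textbf{The coefficients still have to be computed.} This is not monomial bookkeeping. Fox calculus gives
\[ s_i^2\delta_i=T_iT_{i+1}\delta_i,\qquad s_i^2\delta_{i-1}=\delta_{i-1}+T_i(1-T_{i-1})\delta_i,\qquad s_i^2\delta_{i+1}=\delta_{i+1}+(1-T_{i+2})\delta_i. \]
The map $H_1(\tilde Y)\to H_1(\tilde Y,\tilde\partial_p)$ is equivariant and injective, because the lifted small circles are lines. With tethered lifts it sends $\delta_j\mapsto(1-T_j)(1-T_{j+1})\varepsilon_j$. Dividing by these non-unit factors, which is legitimate since the target is torsion-free, turns $T_i(1-T_{i-1})$ and $1-T_{i+2}$ into exactly $T_i(1-T_{i+1})$ and $1-T_i$. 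No rescaling of the $\varepsilon_j$ by monomials could achieve this.

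\textbf{A smaller point.} Your remark about ``the transpose of the column convention'' is mistaken. Given your own bullets, the displayed matrix is in the ordinary column convention: columns are the images, and the middle row lists their $\varepsilon_i$-coefficients. This is also how the paper uses it in Proposition~\ref{irreducibility}.
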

	In lemma 15, they estabilish an existence of a skew-hermitian form $h$ with respect to the involution $T_i^* = T_i^{-1}$ of the ring $\mathbf{Z}[T_0^{\pm 1}, T_1^{\pm 1}\ldots, T_n^{\pm 1}]$ on the reduced Gassner representation such that the determinant of the Gram matrix $h$ with respect to the $\varepsilon$-basis is $\frac{1 - T_0 T_1 \ldots T_n}{(1-T_0)(1-T_1) \ldots (1-T_n)}$. For our purposes, we note that the determinant of the matrix from lemma 14 specialised to $\mathbf{F}_q$ is a root of unity $\zeta^{k_i + k_{i+1}}$ and that the skew-hermitian form $h$ continues to be nondegenerate in the specialisation if and only if $k_0 + k_1 + \ldots + k_n \neq 0$. On the level of coverings $X_{\bar{a}, \bar{k}}$, these are the curves $y^l = \prod_{i=0}^n (x-a_i)^{k_i}$ whose smooth proper model becomes ramified over $\infty$. \par 
	Note that in the linear case, we can consider the Gassner representation into $\GL_{n-1}(\mathbf{F}_q \oplus \mathbf{F}_q)$ as a pair of representations over $\mathbf{F}_q$. The nondegeneracy of the hermitian form in this case means that the pairing in induces between the two representations is nondegenerate, so that they are dual to each other. 
	\begin{proposition} \label{irreducibility}
		The specialized reduced Gassner representation is absolutlely irreducible if $t_0 \ldots t_n \neq 1$. Otherwise, it has a one-dimensional space of invariant vectors, the quotient by which is absolutely irreducible.
	\end{proposition}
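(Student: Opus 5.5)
Since the generators $s_i^2$ generate $P_{n+1}$, the plan is to prove irreducibility by showing that any invariant subspace $W$ (over the algebraic closure $\bar F$ of the coefficient field) must either contain a distinguished "image" vector or be killed by the transvection-like parts. The basic tool is Lemma \ref{explicit}. The operator $s_i^2 - 1$ has rank one. Its image is spanned by $\varepsilon_i$, and its kernel is the hyperplane cut out by the linear functional
\[ \lambda_i(v) = t_i(1-t_{i+1})v_{i-1} + (t_it_{i+1}-1)v_i + (1-t_i)v_{i+1}, \]
where we use the indexing convention of the lemma and truncate at the boundary indices. Hence for each $i$ and each invariant $W$, either $\varepsilon_i \in W$ or $\lambda_i|_W = 0$.

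First I will reduce to a generic situation. We assume all $t_i \neq 1$; this holds in our application, since $t_i = \zeta^{k_i}$ with $k_i \not\equiv 0 \pmod l$. The basis vectors $\varepsilon_i$ are linked along a path: $\lambda_i$ involves $\varepsilon_{i-1}, \varepsilon_i, \varepsilon_{i+1}$, and the coefficient of $\varepsilon_{i+1}$, namely $(1-t_i)$, is nonzero.

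\emph{Step 1: if $W$ contains some $\varepsilon_j$, then $W$ is everything.} Applying $s_{j\pm1}^2$ to $\varepsilon_j$ gives $\varepsilon_j$ plus a multiple of $\varepsilon_{j\pm 1}$. The multiple is the corresponding coefficient of $\lambda_{j\pm1}$, namely $(1-t_{j-1})$ or $t_{j+1}(1-t_{j+2})$, and both are nonzero. So $\varepsilon_{j\pm1} \in W$, and propagating along the chain gives all the basis vectors.

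\emph{Step 2: if $W$ contains no $\varepsilon_j$, then every $\lambda_i$ vanishes on $W$.} In that case $W \subseteq \bigcap_i \ker \lambda_i$, which is exactly the space of vectors fixed by all $s_i^2$. If the generators $s_i^2$ alone do not generate $P_{n+1}$ in this normalization, I will use conjugates of them (the standard generators $A_{ij}$), which are also rank-one perturbations. This fixed space is cut out by a tridiagonal system with nonvanishing off-diagonal entries, so it has dimension at most one. Its existence is governed by the determinant of the coefficient matrix. I expect this determinant to be, up to a unit, $1 - t_0 \cdots t_n$, matching the determinant of the skew-hermitian Gram matrix from Lemma 15 of \cite{cyclic}. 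Checking that identity by induction on $n$ (expanding the tridiagonal determinant) is the main computational step and the likely obstacle, particularly keeping track of the boundary terms. Alternatively, one can argue abstractly: the invariant vectors lie in the radical of $h$, and $h$ is nondegenerate exactly when $\prod t_i \neq 1$.

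Conclusion: if $\prod t_i \neq 1$, Step 2 forces $W = 0$, so the representation is absolutely irreducible. If $\prod t_i = 1$, the invariant line $L$ exists. Take an invariant $W$ properly containing $L$. Any $w \in W$ not fixed by some $s_i^2$ has $s_i^2 w - w$ equal to a nonzero multiple of $\varepsilon_i$, so $\varepsilon_i \in W$, and Step 1 gives $W = V$. Hence $V/L$ is irreducible over $\bar F$. This argument is insensitive to the base field, which gives absoluteness.
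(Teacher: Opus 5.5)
Your route is the paper's. $s_i^2-1$ has rank one with image $\langle\varepsilon_i\rangle$, so an invariant subspace either contains some $\varepsilon_j$ and then everything, by propagating along the chain with the invertible coefficients $1-t_{j-1}$ and $t_{j+1}(1-t_{j+2})$. Otherwise it lies in the common fixed space of the $s_i^2$, which is cut out by the tridiagonal system $\lambda_0=\cdots=\lambda_{n-1}=0$. The one missing piece is the step you flagged yourself. Nothing in the proposal shows that this fixed space is nonzero when $t_0\cdots t_n=1$, so ``the invariant line $L$ exists'' rests only on an expected determinant identity. The shortcut through $h$ does not close this. From $h(u,(s_i^2-1)w)=0$ and $\mathrm{im}(s_i^2-1)=\langle\varepsilon_i\rangle$ you do get that an $s_i^2$-fixed vector lies in the radical of $h$. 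But that only rules out fixed vectors when $h$ is nondegenerate; it says nothing about existence when $h$ degenerates.

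The paper fills this by solving the recursion outright, and it is short. In your notation set $x_k=1-t_0t_1\cdots t_k$ for $0\le k\le n-1$. Expanding gives $\lambda_i(x)=0$ for $0\le i\le n-2$, while
\[ \lambda_{n-1}(x)=t_{n-1}(1-t_n)x_{n-2}+(t_{n-1}t_n-1)x_{n-1}=(t_{n-1}-1)(1-t_0t_1\cdots t_n). \]
The first $n-1$ equations determine $x_1,\dots,x_{n-1}$ from $x_0$, since the coefficient of $x_{i+1}$ in $\lambda_i$ is the unit $1-t_i$. Hence the common fixed space is nonzero exactly when $t_0\cdots t_n=1$. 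Equivalently, your determinant is $\pm(1-t_1)\cdots(1-t_{n-1})(1-t_0\cdots t_n)$, as you predicted. With this in place, the rest of your argument goes through, including the passage to $V/L$ and absoluteness.

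Separately, your opening claim that the $s_i^2$ generate $P_{n+1}$ is false for $n\ge 2$: the abelianization of $P_{n+1}$ is free of rank $\binom{n+1}{2}$. You never need it for irreducibility, since a $P_{n+1}$-invariant subspace is in particular invariant under each $s_i^2$. For the existence half, strictly speaking a vector fixed by all $s_i^2$ must also be checked against the remaining generators $A_{ij}$; the paper likewise checks only the $s_i^2$ here.
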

	\begin{proof}
		Let $V$ be a subrepresentation of the specialized reduced Gassner representation, which contains a non-invariant vector $v$. By inspection, for any $i$, the image of $s_i^2-1$ is spanned by $\varepsilon_i$, and the only basis vectors on which $s_i^2-1$ is nonzero, are $\varepsilon_{i \pm 1}$. Because $v$ is non-invariant, we may find an index $j$ such that $(s_j^2 - 1) v \neq 0$, and hence we get that $\varepsilon_j \in V$. Applying $s_{j-1}^2 - 1$, we get $\varepsilon_{j-1} \in V$, applying  $s_{j+1}^2 - 1$, we get $\varepsilon_{j+1} \in V$, and contuning in this way, we get that $V$ is the whole specialized reduced Gassner representation. \par 
		Now let us find invariant vectors. Let $u = x_0\varepsilon_0+ \ldots x_{n-1} \varepsilon_{n-1}$ be a vector such that $s_i^2 u = u$. This unfolds to $t_i(1-t_{i+1})x_{i-1} + t_it_{i+1}x_i-t_ix_{i+1}=0$. Starting from the first equation $t_0t_1x_0-t_0x_1=0$, we see that every coordinate can be expressed in terms of previous ones, so that the space of invariants is at most one-dimensional. In fact, solving all equations except the last one, we obtain $x_i = 1 - t_0 \ldots t_{i-1}$, and the last equation $t_{n-1}(1-t_n)x_{n-1}+t_{n-1}t_nx_n=0$ becomes equivalent to $1-t_0\ldots t_{n-1}=0$.
	\end{proof}
	Note also that in \cite[section 5.1]{cyclic}, the vector $(1-t_0)\varepsilon_0 + \ldots + (1-t_0 \ldots t_{n-1}) \varepsilon_{n-1}$ is found to exactly span the kernel of the hermitian form in the degenerate case. \par 
	We will require a slightly stronger property of the Gassner representation than mere irreducibility. We will need to know that is does not admit even a proper nonzero invariant $\mathbf{F}_p$-vector subspace. By the following general lemma, it amounts to the fact that the representation in question is not writable over a proper subfield of $\mathbf{F}_q$, which is true, because the determinants of $s_i^2$ generate the whole multiplicative group $\mathbf{F}_q^{\times}$.
	\begin{lemma} \label{invaddsubgp}
		Let $G$ be a group, and let $\rho$ be an absolutely irreducible representation of $G$ over a finite field $\mathbf{F}$ of characteristic $p$, which is not writable over a proper subfield of $\mathbf{F}$. Then $\rho$ admits no proper nonzero invariant $\mathbf{F}_p$-vector spaces.
	\end{lemma}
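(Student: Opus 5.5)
The plan is to reduce the statement to the standard fact that an irreducible module over $\mathbf{F}[G]$ with $\mathbf{F}$ a finite field is semisimple as an $\mathbf{F}_p[G]$-module. Its $\mathbf{F}_p$-irreducible constituents are then Galois twists of one another.

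Let $V$ be the underlying $\mathbf{F}$-space of $\rho$, and let $W \subseteq V$ be a nonzero $G$-invariant $\mathbf{F}_p$-subspace. Consider the $\mathbf{F}$-span $\mathbf{F}W$. It is $G$-invariant and nonzero, so by irreducibility $\mathbf{F}W = V$.

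Replace $W$ by a minimal nonzero $G$-invariant $\mathbf{F}_p$-subspace $W_0 \subseteq W$. Then $W_0$ is an irreducible $\mathbf{F}_p[G]$-module. Set $D = \mathrm{End}_{\mathbf{F}_p[G]}(W_0)$, which by Schur and Wedderburn is a finite field $\mathbf{F}_{p^d}$.

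Now consider the natural $\mathbf{F}$-linear $G$-map $\Phi : W_0 \otimes_{\mathbf{F}_p} \mathbf{F} \to V$. It is surjective, since its image is $\mathbf{F}W_0 = V$. Over $\mathbf{F}$, the module $W_0 \otimes_{\mathbf{F}_p} \mathbf{F}$ splits as a direct sum of the Galois conjugates of an absolutely irreducible $\mathbf{F}_{p^d}$-form $U$ of $W_0$. Concretely, if $\mathbf{F}_{p^d} \subseteq \mathbf{F}$, this sum is $\bigoplus_{\sigma \in \Gal(\mathbf{F}_{p^d}/\mathbf{F}_p)} U^\sigma \otimes_{\mathbf{F}_{p^d}} \mathbf{F}$. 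In general, first pass to a common field containing $\mathbf{F}$ and $\mathbf{F}_{p^d}$; absolute irreducibility of $\rho$ lets us do this harmlessly.

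Since $V$ is absolutely irreducible, the surjection $\Phi$ identifies $V$ with one summand $U^\sigma \otimes \mathbf{F}$. Consequently the character of $\rho$ takes values in $\mathbf{F}_{p^d}$, and $\mathbf{F}_{p^d} \cap \mathbf{F}$ contains all traces. Since the Brauer group of a finite field is trivial, $\rho$ can be written over $\mathbf{F}_{p^d} \cap \mathbf{F}$. The hypothesis then forces $\mathbf{F} \subseteq \mathbf{F}_{p^d}$.

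It remains to compare dimensions. We have $\dim_{\mathbf{F}_p} W_0 = d \cdot \dim U = d \cdot \dim_{\mathbf{F}} V$. On the other hand $W_0 \subseteq V$, so $\dim_{\mathbf{F}_p} W_0 \le [\mathbf{F}:\mathbf{F}_p]\dim_{\mathbf{F}} V$, which gives $d \le [\mathbf{F}:\mathbf{F}_p]$. Combined with $\mathbf{F} \subseteq \mathbf{F}_{p^d}$, this yields $d = [\mathbf{F}:\mathbf{F}_p]$. Then $\dim_{\mathbf{F}_p} W_0 = \dim_{\mathbf{F}_p} V$, so $W_0 = V$, hence $W = V$.

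The main obstacle is the step identifying $V$ with a Galois-conjugate summand and deducing that $\rho$ is defined over $\mathbf{F}_{p^d}\cap\mathbf{F}$. This needs care with fields of definition: I would cite the standard fact that for finite fields, an absolutely irreducible representation is realizable over the field generated by its character values.

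A cleaner alternative avoids this. View $D' := \{x \in \mathbf{F} : xW_0 \subseteq W_0\}$ as a subfield of $\mathbf{F}$. Then $W_0$ is a $D'[G]$-module with $D' \otimes_{D'} \mathbf{F} \to V$ surjective. Comparing dimensions, $W_0$ is a $D'$-form of $V$ exactly when the map is injective, which holds by absolute irreducibility. The hypothesis then gives $D' = \mathbf{F}$, so $W_0$ is an $\mathbf{F}$-subspace and equals $V$. I would try this route first and fall back to the character argument if injectivity is unclear.
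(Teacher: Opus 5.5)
Your main (character) argument is correct. It rearranges the paper's idea rather than copying it.

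The paper restricts scalars on all of $V$ and writes $V|_{\mathbf{F}_p}\otimes_{\mathbf{F}_p}\mathbf{F}\cong\bigoplus_{\sigma}\rho^{\sigma}$. It uses the trace-field descent theorem cited from Isaacs, plus the hypothesis, to show the $\rho^{\sigma}$ are pairwise non-isomorphic. It then concludes that an invariant subspace coming from an $\mathbf{F}_p$-subspace is a Frobenius-stable sum of some $\rho^{\sigma}$. Such a sum is $0$ or everything, because Frobenius permutes the $\rho^{\sigma}$ transitively.

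You instead extend scalars only on a minimal invariant $\mathbf{F}_p$-subspace $W_0$. You split it into conjugates of its absolutely irreducible form over $D=\mathrm{End}_{\mathbf{F}_p[G]}(W_0)\cong\mathbf{F}_{p^d}$; it is absolutely irreducible because $\mathrm{End}_{D[G]}(W_0)=D$. From the surjection onto the absolutely irreducible $V$ you read off that the traces lie in $\mathbf{F}_{p^d}\cap\mathbf{F}$. You then apply the same descent theorem and close with a dimension count.

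The essential input is identical. Your version replaces the paper's rather terse final step (that no proper nonzero sum of the $\rho^{\sigma}$ descends to $\mathbf{F}_p$) with an explicit count, at the price of a little more setup. Both arguments tacitly pass from $G$ to its finite image $\rho(G)$ before invoking a theorem about finite groups, which is harmless.

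Your ``cleaner alternative'' should not be used as written. Injectivity of $W_0\otimes_{D'}\mathbf{F}\to V$ does not follow from absolute irreducibility of $V$ alone. It is equivalent to $W_0$ being absolutely irreducible over $D'$, i.e.\ to $D$ acting on $W_0$ through scalars of $\mathbf{F}$, and that is the real content of the lemma.

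It can be proved with your main route's machinery, as follows.
\begin{itemize}
\item Work over $K\supseteq\mathbf{F},\mathbf{F}_{p^d}$. The $d$ conjugates $W_0\otimes_{D,\tau}K$ are pairwise non-isomorphic, since $\dim_K\mathrm{End}_{K[G]}(W_0\otimes_{\mathbf{F}_p}K)=d$.
\item Hence the kernel of the base change $\Phi_K$ is the sum of the summands other than the one isomorphic to $V_K$.
\item This gives $ew=\sigma(e)w$ for $e\in D$ and $w\in W_0$. Taking $w\neq 0$ shows $\sigma(D)\subseteq\mathbf{F}$.
\item Therefore $\sigma(D)\subseteq D'$, and the dimensions match.
\end{itemize}
Completed this way, the alternative even avoids the descent theorem. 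With injectivity merely asserted, however, it has a gap exactly at the crux.
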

	\begin{proof}
		Let $\rho'$ be an $\mathbf{F}_p$-representation of $G$ obtained by the restriction of scalars from $\rho$. Then $\rho' \otimes \mathbf{F} = \bigoplus_{\sigma \in \Gal(\mathbf{F}/\mathbf{F}_p)} \rho^{\sigma}$. I claim that $\rho^{\sigma}$ are a family of distinct irreducible representations of $G$. Indeed, assume that there exists $\sigma_0 \neq 1$ such that $\rho^{\sigma_0} \cong \rho$. Then the (Frobenius) character of $\rho$ belongs to the field $\mathbf{F}^{\langle \sigma_0 \rangle}$. Therefore, by \cite[theorem 9.14]{isaacs}, $\rho$ descends to $\mathbf{F}^{\langle \sigma_0 \rangle}$, contradicting the assumption. \par
		The claim now follows, because the only subrepresentations of $\rho' \otimes \mathbf{F}$ are the direct sums of $\rho^{\sigma}$'s, but clearly no such proper nonzero sum descends to $\mathbf{F}_p$.
	\end{proof}
	\section{Gassner representations with degenerate Hermitian forms} \label{degen}
	The following is contained in the proposition 21 of \cite{cyclic}:
	\begin{proposition}\label{nontriviality1}
		Let $\bar{g}_n(t_0, \ldots, t_n)$ be a specialized reduced Gassner representation over $\mathbf{E}_q$ such that $t_0 \ldots t_n = 1$ and $1 - t_0 \in \mathbf{E}_q^{\times}$. Let $\Delta' = (s_1 \ldots s_n)(s_1 \ldots s_{n-1}) \ldots (s_1s_2) s_1$. Then the image $[s_0^2, \Delta'^2]$ is a nontrivial element of the unipotent radical of the degenerate unitary group. 
	\end{proposition}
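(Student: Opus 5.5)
The element $\Delta'^2$ is the full twist on the strands $1,\ldots,n$. It is central in the braid group on those strands and commutes with $s_2^2,\ldots,s_{n-1}^2$. Its image is therefore controlled by Schur's lemma on the subrepresentation spanned by the strands $1,\dots,n$. The plan has four steps.

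\emph{Step 1: show $\Delta'^2$ acts as a scalar plus a rank-one term.} By Lemma \ref{explicit}, the elements $s_i^2$ with $i\ge 2$ fix $\varepsilon_0$. They act on $\mathrm{span}(\varepsilon_1,\ldots,\varepsilon_{n-1})$ modulo $\varepsilon_0$ as a reduced Gassner representation for the strands $1,\ldots,n$ with parameters $t_1,\ldots,t_n$. The product $t_1\cdots t_n=t_0^{-1}\neq 1$, because $1-t_0$ is a unit. By Proposition \ref{irreducibility}, this smaller representation is absolutely irreducible. Schur's lemma then makes the full twist act on it by a scalar $\lambda$, and comparing determinants identifies $\lambda$ as (up to the standard computation) $t_1\cdots t_n=t_0^{-1}$. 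We also need $\Delta'^2$ to preserve the flag $\langle\varepsilon_0\rangle\subset V$ in a suitable sense, or dually the hyperplane $\mathrm{span}(\varepsilon_1,\dots,\varepsilon_{n-1})$. This should follow from the explicit matrices: $s_1^2$ is the only generator touching $\varepsilon_0$, and every $s_i^2$ with $i\geq1$ has image in $\mathrm{span}(\varepsilon_i)$ with $i\ge1$. Hence $\Delta'^2$ preserves $W=\mathrm{span}(\varepsilon_1,\ldots,\varepsilon_{n-1})$ and acts on $V/W$ trivially. So $\Delta'^2$ acts by $\lambda$ on $W$, by $1$ on $V/W$, and differs from $\lambda\cdot\mathrm{id}$ only by a term sending $\varepsilon_0$ into $W$.

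\emph{Step 2: compute the commutator.} Write $s_0^2=1+u\otimes\phi$, a generalized reflection with image $\langle\varepsilon_0\rangle$. Then $[s_0^2,\Delta'^2]$ is expressed via how $\Delta'^2$ moves $\varepsilon_0$ and $\phi$. Since $\Delta'^2$ scales $W$ by $\lambda\neq 1$ and fixes $\varepsilon_0$ modulo $W$, the commutator acts trivially on the associated graded of the flag $\ker h\subset \ker h^\perp=V\supset\cdots$. Here we use that the kernel of $h$ is spanned by $(1-t_0)\varepsilon_0+\ldots$, as recalled after Proposition \ref{irreducibility}. Concretely, I would check that the commutator is unipotent, is an isometry of $h$ (both factors are), and induces the identity on $V/\ker h$. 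The last claim follows since on $V/\ker h$ the nondegenerate form identifies the action, and the commutator of a scalar-like element with anything is trivial modulo the correction term. Triviality on $V/\ker h$ and on $\ker h$ is exactly membership in the unipotent radical of the degenerate unitary group, whose elements are $v\mapsto v+f(v)\kappa$ with $\kappa$ spanning $\ker h$.

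\emph{Step 3: prove nontriviality.} The commutator equals $1$ iff $\Delta'^2$ commutes with $s_0^2$. That would force $\Delta'^2$ to preserve the image $\langle\varepsilon_0\rangle$ of $s_0^2-1$, hence to act on $\varepsilon_0$ by a scalar. This is impossible: $\Delta'^2\varepsilon_0\equiv\varepsilon_0 \pmod W$, and if $\Delta'^2\varepsilon_0=\varepsilon_0$ then $\Delta'^2$ would have eigenvalue $1$ with multiplicity $1$ and $\lambda$ with multiplicity $n-2$. We would need to rule this out by showing the $W$-component of $\Delta'^2\varepsilon_0$ is nonzero. I expect this to be computed from $s_1^2\varepsilon_0=\varepsilon_0+t_1(1-t_2)\varepsilon_1$ together with an inductive description of the full twist.

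\emph{Main obstacle.} The delicate step is Step 3 (and the exact scalar in Step 1), since the $W$-component must be nonzero. My first attempt is to avoid explicit computation. Suppose instead $\Delta'^2$ fixes $\varepsilon_0$. Then it is diagonalizable with $W$ and $\langle\varepsilon_0\rangle$ invariant, so $\langle\varepsilon_0\rangle$ would be invariant under the strands-$1..n$ braid group. But $s_1^2\varepsilon_0-\varepsilon_0=t_1(1-t_2)\varepsilon_1$. If $t_2\ne1$ (true, since $k_2\neq0$ and $\zeta$ is primitive mod $\mathfrak p$ as $l\ne p$), this is a contradiction: $\varepsilon_0$ spans an invariant line only if $s_1^2$ fixes it. More carefully, $\Delta'^2$ is central in that braid group, so its $1$-eigenspace is invariant under that group, and that eigenspace would be $\langle\varepsilon_0\rangle$, which is not invariant.
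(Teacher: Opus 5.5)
\textbf{Gap: the scalar $\lambda$.} Your whole argument rests on the scalar $\lambda$ by which $\Delta'^2$ acts on $W=\langle\varepsilon_1,\ldots,\varepsilon_{n-1}\rangle$, and you never establish it. Over $\mathbf{E}_q$, comparing determinants can only give $\lambda^{\dim W}$, namely $\lambda^{n-1}=(t_1\cdots t_n)^{n-1}=t_0^{-(n-1)}$. This is compatible with $\lambda=1$ whenever the order of $t_0$ divides $n-1$; in the application $t_0=\zeta^{k_0}$ has order $l$, and nothing excludes $l\mid n-1$.

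Yet $\lambda\neq1$ is exactly what nontriviality rests on. $\Delta'^2$ fixes the kernel vector $\kappa=\sum_{i=0}^{n-1}(1-t_0\cdots t_i)\varepsilon_i$, and $\kappa\notin W$ because its $\varepsilon_0$-coefficient $1-t_0$ is a unit. Hence $V=\langle\kappa\rangle\oplus W$ and $\Delta'^2$ is $1\oplus\lambda$. If $\lambda=1$, then $\Delta'^2$ is the identity on $V$ and the commutator is trivial. Your Step 3 also assumes $\lambda\neq1$ outright (to identify the $1$-eigenspace with $\langle\varepsilon_0\rangle$), and the ``main obstacle'' paragraph does not address it.

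There are two ways to close the gap:
\begin{enumerate}
\item Cite the standard fact that the full twist on strands $1,\ldots,n$ acts on the reduced Gassner representation by $T_1\cdots T_n$. It is the Dehn twist about a curve enclosing those punctures.
\item Run your Schur argument \emph{generically}, over the fraction field of $\mathbf{Z}[T_1^{\pm1},\ldots,T_n^{\pm1}]$. The scalar and its inverse appear as diagonal entries of the matrices of $\Delta'^{\pm2}$, so both lie in the Laurent ring; hence the scalar is $\pm$ a monomial. The determinant $(T_1\cdots T_n)^{n-1}$ fixes the monomial. The specialization $T_i\mapsto1$, where pure braids act trivially, fixes the sign.
\end{enumerate}
Specializing then gives $\lambda=t_1\cdots t_n=t_0^{-1}\neq1$.

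\textbf{Two further repairs, both easy once you use $\kappa$.} First, $\Delta'^2$ is not a product of the $s_i^2$ with $i\geq1$: the pure braid group on strands $1,\ldots,n$ also needs the non-adjacent generators. So Lemma~\ref{explicit} alone does not show that $W$ is $\Delta'^2$-stable. Use instead the embedding of $\bar g_{n-1}(t_1,\ldots,t_n)$ as the subrepresentation $W$, which the paper records. Triviality on $V/W$ is then automatic from $\Delta'^2\kappa=\kappa$.

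Second, in the decomposition $V=\langle\kappa\rangle\oplus W$ your Steps 2 and 3 collapse to a direct computation. This avoids the eigenspace detour and the extra hypothesis $t_2\neq1$, which holds in the paper's setting but is not part of the statement.
\begin{itemize}
\item Write $s_0^2=1+\varepsilon_0\otimes\phi$. Then $\phi(\kappa)=0$ by invariance of $\kappa$, and $\phi(\varepsilon_1)=1-t_0$.
\item Write $\varepsilon_0=(1-t_0)^{-1}\kappa+w_1$ with $w_1\in W$.
\item Let $M$ be the $W$-block of $s_0^2$, that is, $M(w)=w+\phi(w)w_1$.
\end{itemize}
Then $\Delta'^2 s_0^2\Delta'^{-2}s_0^{-2}$, the inverse of $[s_0^2,\Delta'^2]$, fixes $\kappa$ and sends $w\in W$ to $w+(1-t_0)^{-1}(\lambda^{-1}-1)\,\phi(M^{-1}w)\,\kappa$.

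This element visibly lies in the unipotent radical. It is nontrivial because $\lambda^{-1}-1=t_0-1$ and $\phi|_W\neq0$. Since everything is componentwise, it is nontrivial in both factors in the linear case. That is what the paper needs afterwards; the paper itself gets it from the explicit formula imported from the cited Proposition~21.
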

	Note that the Gassner representations are embedded in one another: if $\varepsilon_0, \ldots, \varepsilon_{n-1}$ is the standard basis of $\bar{g}_n(t_0, \ldots, t_n)$, then $\varepsilon_1, \ldots, \varepsilon_{n-1}$ is the standard basis for $\bar{g}_{n-1}(t_1, \ldots, t_{n-1})$, meaning that the action of the subgroup $P_n \subset P_{n+1}$ of braids where the zeroth strand is not involved preserves the hyperplane $\langle \varepsilon_1, \ldots, \varepsilon_n \rangle$ and its action there is by the previous Gassner representation. \par 
	\begin{definition}
		Let $V$ be a vector space over a field $F$, let $v \in V$, let $W$ be a hyperplane of $V$ not containing $v$, and let $\xi$ be a linear functional on $W$. Writing a general vector in $V$ as $cv + w$, $c \in F, w \in W$, we can consider a linear transformation $cv + w \mapsto (c + \xi(w))v +w$. Linear transformations of such form for varying $\xi$ will be called \emph{transvections along $v$}, and denoted $T_v(\xi)$. This definition is independent of $W$ in an obvious sense.
	\end{definition}
	We can now state what the unipotent radicals of the relevant unitary groups are. For the unitary case, if $V$ is vector space over a field which carries an Hermitian form with one-dimensional kernel spanned by $v$, then the unipotent radical of the isometry group of this form consists of all the transvections along $v$. For the linear case, if $V$ and $W$ are two vector spaces over the same field, which carry a pairing with one-dimensional kernels on each side, spanned by $v$ and $w$, respectively, then the unipotent radical of the isometry group of this pairing consists of pair of transvections along $v$ and $w$. \par 
	Specializing the discussion to the case $V=\bar{g}_n(t_0, \ldots, t_n)$ under the assumptions of \ref{nontriviality1}, we may pick $W = \langle \varepsilon_1, \ldots, \varepsilon_n \rangle$, and identify the transvections along $v$ with linear functionals on $W$. I claim that the conjugation action of the isometries on transvections is then identified with the smaller Gassner representation (or technically, its dual, but because it carries a nondegenerate hermitian form, the distinction will be ignored). Indeed, for any $\sigma \in P_n$, $\sigma^{-1} T_v(\xi) \sigma (w) = \sigma^{-1}(\sigma(w)+\xi(\sigma w)v)=w+\sigma^*(\xi)(w)\sigma^{-1}(v)=w+\sigma^*(\xi)(w)v$, because $v$ is invariant. \par 
	Now let us consider the intersection of the image of $\bar{g}_n(t_0, \ldots, t_n)$ with the unipotent radical of the unitary group. Let us split the discussion into two cases. In the unitary case, this intersection is nonzero by \ref{nontriviality1}, and because it is a subgroup, we may consider it as an additive subgroup of $\bar{g}_{n-1}(t_1, \ldots, t_{n-1})$, invariant under the action of $P_n$. But by lemma \ref{invaddsubgp}, there are no proper nonzero invariant additive subgroups, so we conclude that the image of $\bar{g}_n(t_0, \ldots, t_n)$ contains the unipotent radical in this case. \par 
	For the linear case, where the representation is over $\mathbf{E}_q = \mathbf{F}_q \oplus \mathbf{F}_q$, note that the formula given in \cite[Proposition 21]{cyclic}, gives that $\xi(\varepsilon_2) = t_1^{-1}t_2^{-1}(1-t_1)$, so this transvection in nontrivial over both copies of $\mathbf{F}_q$. Now, when we consider the intersection of Gassner image with the unipotent radical as an additive group with an action of $P_n$, it becomes $\bar{g}_{n-1}(\pi_1(t_1), \ldots, \pi_1(t_n)) \oplus \bar{g}_{n-1}(\pi_2(t_1), \ldots, \pi_2(t_n))$, where $\pi_i : \mathbf{F}_q \oplus \mathbf{F}_q \to \mathbf{F}_q$ are projections. Therefore in this case, the Gassner image contains the unipotent radical if $\bar{g}_{n-1}(\pi_1(t_1), \ldots, \pi_1(t_n)) \not \cong \bar{g}_{n-1}(\pi_2(t_1), \ldots, \pi_2(t_n))$, but may be the graph of an isomorphism between $\bar{g}_{n-1}(\pi_1(t_1), \ldots, \pi_1(t_n))$ and $\bar{g}_{n-1}(\pi_2(t_1), \ldots, \pi_2(t_n))$ inside their direct sum, if such exists. 
	\section{Gassner representations with degenerate coordinate hyperplanes} \label{next_degen}
	In the next step, we have a significant departure from the argument in \cite{cyclic}. In the original article, the proof of proposition 23 relies on the Zariski density of the monodromy image in the unitary group, which cannot be used over a finite field. We argue in a different way. The main new insight is that we can obtain transvections in the image of the "next" Gassner representation $\bar{g}_{n+1}(t_0, \ldots, t_{n+1})$ as commutators of liftings of transvections from the degenerate Gassner representation $\bar{g}_n(t_0, \ldots, t_n)$. 
	\begin{proposition} \label{next_transvect}
		Assume that $q$ is odd and $n > 2$. Let $\bar{g}_{n+1}(t_0, \ldots, t_{n+1})$ be a specialized reduced Gassner representation over $\mathbf{E}_q$ such that $t_0 \ldots t_n = 1$, $t_0 -1 \in \mathbf{E}_q^{\times}$, and $t_{n+1} - 1 \in \mathbf{E}_q^{\times}$. Then there exists an isotropic vector $v$\footnote{This means not $(0,u)$, $(u,0)$} such that the image of $\bar{g}_{n+1}(t_0, \ldots, t_{n+1})$ contains all unitary transvections along $v$.
	\end{proposition}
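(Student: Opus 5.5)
We work with the chain of embeddings of Gassner representations. Put $V=\bar{g}_{n+1}(t_0,\ldots,t_{n+1})$ with basis $\varepsilon_0,\ldots,\varepsilon_n$, and let $U=\langle\varepsilon_0,\ldots,\varepsilon_{n-1}\rangle$. The subgroup $P_{n+1}\subset P_{n+2}$ of braids not involving the last strand preserves $U$ and acts on it by $\bar{g}_n(t_0,\ldots,t_n)$. Because $t_0\cdots t_n=1$, the hermitian form restricted to $U$ has a one-dimensional kernel. It is spanned by
\[ v=(1-t_0)\varepsilon_0+\ldots+(1-t_0\cdots t_{n-1})\varepsilon_{n-1}. \]
This $v$ is the candidate isotropic vector, and it is nonzero because $1-t_0$ is a unit. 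Since $t_{n+1}\neq 1$, the total product is $t_0\cdots t_{n+1}=t_{n+1}\neq 1$, so the form on $V$ is nondegenerate and $V$ is absolutely irreducible by Proposition \ref{irreducibility}. Thus $U=v^{\perp_V}$ is exactly a degenerate hyperplane containing its own radical line.

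\textbf{Step 1: lifts.} By Section \ref{degen}, the image of $P_{n+1}$ acting on $U$ contains the unipotent radical of the isometry group of $U$, namely all transvections $T_v(\xi)$ with $\xi$ a functional on $W=\langle\varepsilon_1,\ldots,\varepsilon_{n-1}\rangle$ (componentwise in the linear case). There is one caveat: in the linear case it may only contain a graph subgroup. Each such $T_v(\xi)$ lifts to an element $\tilde T(\xi)$ of the image on $V$. It preserves $U$ and acts on $U$ as $T_v(\xi)$. Since it is an isometry of $V$, it fixes $v$, and it acts on $V/U$ by a scalar $\lambda$ (of the type $\det$, a root of unity). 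Replacing $\tilde T(\xi)$ by a suitable power, or taking commutators of two lifts, makes $\lambda=1$. The difference of two lifts of the same $T_v(\xi)$ acts trivially on $U$, so it is a unitary transvection along $v$ (or along $U^\perp=\langle v\rangle$). This is why commutators are the natural tool.

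\textbf{Step 2: commutators.} Consider lifts $A=\tilde T(\xi)$ and $B=\tilde T(\eta)$ with trivial action on $V/U$. Then $A-1$ maps $V\to U$ and $U\to\langle v\rangle$, and $(A-1)v=0$. A direct computation then shows that $[A,B]$ acts as the identity on $U$ and on $V/U$, and it sends a vector $e\notin U$ to $e+c\,v$. Here $c$ is bilinear/alternating in $(\xi,\eta)$: it is $\xi((B-1)e)-\eta((A-1)e)$ up to sign. So $[A,B]$ is a unitary transvection along $v$, namely $x\mapsto x+c\,h(x,v)v$ after normalizing, with $c$ in the appropriate "trace-zero'' subgroup. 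The task is to show $c\neq0$ for some choice. The projections $(A-1)e \bmod \langle v\rangle\in W$ vary with the choice of lift, and pairing them against arbitrary $\xi$ gives nonzero $c$. Lemma \ref{invaddsubgp} requires $\xi$ to range over a full irreducible module, and $n>2$ ensures $W\neq0$.

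\textbf{Step 3: all transvections.} The set of $c$ obtained forms an additive subgroup of the relevant $\mathbf{F}_q$-line (trace-zero elements of $\mathbf{F}_{q^2}$ in the unitary case, $\mathbf{F}_q$ in the linear case). It is stable under conjugation by the image elements fixing the line $\langle v\rangle$, which multiplies $c$ by norms $N(\mu)$, where $\mu$ is the eigenvalue on $v$. I would conjugate by lifts of the degenerate isometries from $\bar{g}_n$ acting on $v$ by scalars. Combined with scaling $\xi$ through the $\mathbf{E}_q$-module structure of the radical (given by Section \ref{degen}), this yields $c\mapsto N(a)c$ for all $a$. Norms generate $\mathbf{F}_q$ additively when $q$ is odd, which is where the hypothesis on $q$ enters, so every $c$ is reached.

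\textbf{Main obstacle.} The main obstacle is Step 2 in the linear case. There the radical might only be the graph of an isomorphism between the two component representations, and one must check that commutators of such paired lifts still give a nonzero transvection on both factors, so that the vector $v$ is genuinely isotropic rather than of the form $(u,0)$. I would first attack this by computing the pairing $c$ for the explicit element $[s_0^2,\Delta'^2]$ of Proposition \ref{nontriviality1} and its conjugates by $s_n^2$, using the matrices of Lemma \ref{explicit}.
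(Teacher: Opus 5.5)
\textbf{Verdict: genuine gap.} Your strategy is the paper's: lift elements of the unipotent radical on $U=\langle\varepsilon_0,\ldots,\varepsilon_{n-1}\rangle$ to $V$ and take commutators. Your formula $[A,B]=1+ab-ba$, $c=\xi((B-1)e)-\eta((A-1)e)$, is correct. The gap is the decisive claim that $c\neq 0$ for some choice, which you state but do not prove.

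The sentence about projections ``varying with the choice of lift'' and ``pairing against arbitrary $\xi$'' is not an argument, and as written it is inaccurate. For a fixed $\xi$, every lift $A$ of $T_v(\xi)$ in the image has the same $(A-1)e \bmod \langle v\rangle$, because $A$ is an isometry. The only freedom is the $v$-coefficient, and it does not enter $c$. (For the same reason $\lambda=1$ automatically: $h(\tilde T x,v)=h(\tilde Tx,\tilde Tv)=h(x,v)$ and $U=v^{\perp}$.)

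This rigidity is exactly what the paper exploits. For $u\in U$,
\[
h(u,Ae)=h(A^{-1}u,e)=h(u,e)-\xi(u)h(v,e).
\]
Writing $\xi=h(\cdot,\hat\xi)$ on $U$, this gives
\[
(A-1)e\equiv-\overline{h(v,e)}\,\hat\xi \pmod{\langle v\rangle}.
\]
Substituting into your formula,
\[
c=-\overline{h(v,e)}\bigl(h(\hat\eta,\hat\xi)-\overline{h(\hat\eta,\hat\xi)}\bigr),
\]
and $h(v,e)\neq 0$ because $V$ is nondegenerate. In the paper's orthonormal coordinates this is the entry $\sum_i\overline{\alpha_i}\beta_i-\alpha_i\overline{\beta_i}$. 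Since $h$ is nondegenerate on $U/\langle v\rangle$ (equivalently on $W$) and $\mathbf{E}_q\neq\mathbf{F}_q$, $h(\hat\eta,\hat\xi)$ can be any element of $\mathbf{E}_q$. Hence the commutators run through all unitary transvections along $v$.

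This step is not a formality. $c$ is the antisymmetrization of the pairing $(\xi,\eta)\mapsto \xi((B-1)e)$, so it vanishes identically whenever that pairing is symmetric, as in the orthogonal analogue. It is precisely the hermitian structure that makes $c$ nonzero.

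Once you have this formula, your Step 3 is unnecessary, and as written it is partly empty:
\begin{itemize}
\item Every element of the image of $P_{n+1}$ fixes $v$, since $v$ spans the invariant line of the degenerate representation. So the conjugation you propose only ever gives $\mu=1$.
\item Only rescaling $(\xi,\eta)\mapsto(a\xi,a\eta)$ produces $N(a)c$, and that requires the full radical.
\item The norm $\mathbf{F}_{q^2}^{\times}\to\mathbf{F}_q^{\times}$ is surjective for every $q$, so this is not where oddness of $q$ is used.
\end{itemize}
The linear-case graph-subgroup issue you flag is real. However, the paper's proof does not treat it separately either; it simply uses lifts of arbitrary $\phi$. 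So that is not where your proposal falls short of the paper.
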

	\begin{proof} 
		By assumption, the subspace spanned by $\varepsilon_0, \ldots, \varepsilon_{n-1}$, is the space of the degenerate Gassner representation $\bar{g}_n(t_0, \ldots, t_n)$, hence the hermitian form $h$ has a one-dimensional kernel on this hyperplane. We take $v$ to be the spanning vector of the kernel. 
		 Let us take an element $T_v(\phi)$ in the unipotent radical of $U(h|_{\langle \varepsilon_0, \ldots, \varepsilon_{n-1} \rangle })$ , and explicitly calculate what are the possible liftings $\widetilde{T}$ of $T_v(\phi)$ to $U(h)$. \par 
		Pick a vector $\hat{\phi}$ such that $\phi(u) = h(u,\hat{\phi})$ for $u \in \langle \varepsilon_0, \ldots, \varepsilon_{n-1} \rangle$. The image $\widetilde{T}(\varepsilon_n)$ has to satisfy $h(u,\widetilde{T}(\varepsilon_n))=h(\widetilde{T}^{-1}(u),\varepsilon_n) = h(u - \phi(u) v, \varepsilon_n)=h(u, \varepsilon_n)-\phi(u)h(v,\varepsilon_n)=h(u,\varepsilon_n)-h(u, \hat{\phi})h(v,\varepsilon_n)=h(u,\varepsilon_n)-h(u, \overline{h(v,\varepsilon_n)}\hat{\phi})$, which implies that $\widetilde{T}(\varepsilon_n)-\varepsilon_n+\overline{h(v,\varepsilon_n)}\hat{\phi}$ can be any multiple of $v$. For convenience, replace $v$ with $\overline{h(v,\varepsilon_n)}^{-1} v$. \par 
		Let us pick a convenient basis of $\bar{g}_n(t_0, \ldots, t_{n+1})$. We will take the first basis element to be $v$, we will let $e_2, \ldots, e_{n}$ to be an orthonormal basis of $\langle \varepsilon_1, \ldots, \varepsilon_m \rangle$, and we will let the last basis element to be $\varepsilon_n$. Then we can write $\hat{\phi} = \alpha_2 e_2 + \ldots \alpha_{n-1} e_{n-1}$, so that $\widetilde{T}$ has the form 
		\begin{equation}\begin{pmatrix}
			1 & \overline{\alpha_2} & \overline{\alpha_3} & \ldots & \overline{\alpha_n} & \lambda \\
			0 & 1 & 0 & \ldots & & \alpha_2 \\
			0 & 0 & 1 & \ldots & & \alpha_3 \\
			\vdots & & & & & \vdots \\
			0 & 0 & 0 & \ldots & & \alpha_n \\
			0 & 0 & 0 & \ldots & & 1
		\end{pmatrix} \label{extension} \end{equation}
		To find unitary transvections with direction $v$ in the image of $\bar{g}_n(t_0, \ldots, t_{n+1})$, consider two such elements $\widetilde{T}_1$, $\widetilde{T}_2$ with distinct first rows $1, \overline{\alpha_2}, \ldots$ and $1, \overline{\beta_2}, \ldots$. For simplicity, assume that $\sum_i \alpha_i \overline{\alpha_i} = \sum_i \beta_i \overline{\beta_i}=1$. Then a direct computation shows that 
		$$\begin{pmatrix}
			1 & \overline{\alpha_2} & \overline{\alpha_3} & \ldots & \overline{\alpha_n} & \lambda \\
			0 & 1 & 0 & \ldots & & \alpha_2 \\
			0 & 0 & 1 & \ldots & & \alpha_3 \\
			\vdots & & & & & \vdots \\
			0 & 0 & 0 & \ldots & & \alpha_n \\
			0 & 0 & 0 & \ldots & & 1
		\end{pmatrix}^{-1} =  \begin{pmatrix}
			1 & -\overline{\alpha_2} & -\overline{\alpha_3} & \ldots & -\overline{\alpha_n} & 1-\lambda \\
			0 & 1 & 0 & \ldots & & -\alpha_2 \\
			0 & 0 & 1 & \ldots & & -\alpha_3 \\
			\vdots & & & & & \vdots \\
			0 & 0 & 0 & \ldots & & -\alpha_n \\
			0 & 0 & 0 & \ldots & & 1
		\end{pmatrix}$$
		and 
		$$\Big[ \begin{pmatrix}
			1 & \overline{\alpha_2} & \overline{\alpha_3} & \ldots & \overline{\alpha_n} & \lambda \\
			0 & 1 & 0 & \ldots & & \alpha_2 \\
			0 & 0 & 1 & \ldots & & \alpha_3 \\
			\vdots & & & & & \vdots \\
			0 & 0 & 0 & \ldots & & \alpha_n \\
			0 & 0 & 0 & \ldots & & 1
		\end{pmatrix} 
		\begin{pmatrix}
			1 & \overline{\beta_2} & \overline{\beta_3} & \ldots & \overline{\beta_n} & \mu \\
			0 & 1 & 0 & \ldots & & \beta_2 \\
			0 & 0 & 1 & \ldots & & \beta_3 \\
			\vdots & & & & & \vdots \\
			0 & 0 & 0 & \ldots & & \beta_n \\
			0 & 0 & 0 & \ldots & & 1
		\end{pmatrix} \Big] = \begin{pmatrix}
			1 & 0 & 0 & \ldots & 0 & \sum_i \overline{\alpha_i}\beta_i - \alpha_i \overline{\beta_i} \\
			0 & 1 & 0 & \ldots & & 0 \\
			0 & 0 & 1 & \ldots & & 0 \\
			\vdots & & & & & \vdots \\
			0 & 0 & 0 & \ldots & & 0 \\
			0 & 0 & 0 & \ldots & & 1
		\end{pmatrix}$$
		I claim that for $n > 2$, we may pick $\widetilde{T}_1$ and $\widetilde{T}_2$ in such a way that $ \sum_i \overline{\alpha_i}\beta_i - \alpha_i \overline{\beta_i} $ is any imaginary scalar. Indeed, let $\xi$ be an imaginary scalar. Because the norm map is surjective, there is an $\eta$ such that $\frac{\xi \overline{\xi}}{4} + \eta \overline{\eta}=1$. Then we take $(\alpha_2, \alpha_3, \ldots, \alpha_n) = (\frac{\xi}{2}, \eta, 0,\ldots, 0)$, $(\beta_2, \beta_3, \ldots, \beta_n) = (1,0, \ldots, 0)$.
	\end{proof}
	\begin{definition}
		Let $G$ be any group, $R$ be any commutative ring, and $\rho : G \to \GL_d(R)$ be a representation of $G$ on a free $R$-module. We say that $\rho$ is primitive, if there does not exist a decomposition of $R^d$ as a direct sum of free $R$-modules setwise preserved by $G$. Equivalently, $\rho$ is primitive if it is not induced from a finite index subgroup of $G$. 
	\end{definition}
	\begin{lemma} \label{next_prim}
		Under the assumptions of the proposition \ref{next_transvect}, the Gassner representation $\bar{g}_n(t_0, \ldots, t_{n+1})$ is primitive.
	\end{lemma}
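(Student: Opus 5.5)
The plan is to show that an imprimitivity decomposition would be incompatible with the generalized reflections and transvections we already have. Suppose $V = \bar{g}_{n+1}(t_0, \ldots, t_{n+1}) = V_1 \oplus \cdots \oplus V_m$ with $m \geq 2$ is a decomposition into free submodules whose summands are permuted by the image $G$. The first step uses the elements $s_i^2$ from Lemma \ref{explicit}, each of which satisfies $\mathrm{rank}(s_i^2 - 1) = 1$. A standard argument for an element $g$ with $g-1$ of rank one permuting the $V_j$ goes as follows. If $g$ moved some $V_j$ to a different $V_k$, then $g-1$ would have rank at least $\dim V_j$ on $V_j$, and in fact rank at least $2\dim V_j$ on the orbit of $V_j$ unless things degenerate. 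So either every $V_j$ is fixed, or $\dim V_j = 1$ and $g$ swaps exactly two lines, in which case $g$ acts as a reflection of order $2$ on that pair.

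The second step rules out the swapping case. The nontrivial eigenvalue of $s_i^2$ is its determinant $t_it_{i+1}$, the reduction of $\zeta^{k_i+k_{i+1}}$, which is an $l$-th root of unity with $l$ odd. A swap of two lines has determinant $-1$ on that plane, and since $g$ has order dividing $l$, which is odd, this is impossible; nor can such a $g$ be a nontrivial permutation at all. Hence every $s_i^2$ preserves each $V_j$ setwise. Since $s_i^2-1$ has rank one with image spanned by $\varepsilon_i$, we get $\varepsilon_i \in V_{j(i)}$ for every $i$ such that $s_i^2 \neq 1$.

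The third step propagates through the chain, as in the proof of Proposition \ref{irreducibility}. Whenever $(s_{i\pm 1}^2-1)\varepsilon_i \neq 0$, the vector $\varepsilon_{i\pm1}$ is the image of an element of $V_{j(i)}$ under an endomorphism preserving $V_{j(i)}$, so $\varepsilon_{i\pm1} \in V_{j(i)}$. The nonvanishing conditions are the entries $t_i(1-t_{i+1})$ and $1-t_i$, which are nonzero as long as no $t_i$ equals $1$; this holds because each $k_i \not\equiv 0 \pmod l$. All $\varepsilon_i$ therefore lie in a single summand, so $m=1$, which is a contradiction. Alternatively one could reach the contradiction from irreducibility (Proposition \ref{irreducibility}), since the $s_i^2$ lie in the stabilizer of every summand; but the direct chain argument is cleaner. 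In the linear case $\mathbf{E}_q = \mathbf{F}_q\oplus\mathbf{F}_q$, I would run the same argument componentwise, using the idempotents.

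The main obstacle is the first step: making the rank count rigorous over the ring $\mathbf{E}_q$, and excluding a $3$-cycle or longer cycle of one-dimensional summands for a rank-one perturbation. To handle it, I would note that if $g$ moves $V_j$, then $(g-1)$ restricted to $V_j\oplus gV_j$ already has rank at least $\dim V_j$. Equality forces $\dim V_j=1$ and an orbit of length $2$, since a cycle of length $r\ge3$ gives rank at least $r-1\ge2$. The determinant and order argument above then eliminates the remaining possibility.
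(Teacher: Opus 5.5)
Your proof is correct, but it takes a different route from the paper's.

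The paper works with the unitary transvections along $v$ from Proposition \ref{next_transvect}. A transvection has odd order $p$, so it cannot move a summand, since moving one would force it to swap two. The paper then uses the lifts of the form (\ref{extension}) to argue that every summand would have to contain $v$.

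You bypass Proposition \ref{next_transvect} entirely and use only the generators $s_i^2$ of Lemma \ref{explicit}. Your rank count shows that an element with $\mathrm{rank}(g-1)=1$ can move summands only by being a reflection of determinant $-1$ that swaps two lines. This is ruled out by $\det s_i^2=t_it_{i+1}\in\mu_l$ with $l$ odd. Once every $s_i^2$ stabilizes every summand, the chain argument from Proposition \ref{irreducibility} puts all $\varepsilon_i$ into one summand.

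What your route buys:
\begin{itemize}
\item It is more elementary and more general. It needs neither $t_0\cdots t_n=1$ nor any transvections, so it gives primitivity for every specialization $t_i=\zeta^{k_i}$ with $k_i\not\equiv 0 \pmod l$.
\item It avoids a loose step in the paper. The paper applies the no-moving property to the lifts $\widetilde{T}_1,\widetilde{T}_2$, which in general are not transvections; only $\widetilde{T}_1\widetilde{T}_2^{-1}$ is.
\end{itemize}
The paper's route, in exchange, relies only on the transvections that Zalesskii's theorem needs anyway, rather than on the explicit matrices.

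Two minor points:
\begin{itemize}
\item You use that all $t_i-1$ are units. This is not among the stated hypotheses of Proposition \ref{next_transvect}, but it holds for $t_i=\zeta^{k_i}$. It is equally implicit in the paper, already in Proposition \ref{irreducibility}.
\item $s_i^2$ need not have order dividing $l$: if $t_it_{i+1}=1$ it is a transvection of order $p$. Your determinant comparison, or simply the oddness of $p$, still covers that case.
\end{itemize}
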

	\begin{proof}
		Let $V_1 \oplus \ldots \oplus V_k$ be a direct sum decomposition preserved by $P_{n+2}$. First, I claim that any transvection in the image of the Gassner representation acts on the set $\{V_1, \ldots, V_k\}$ trivially. Without loss of generality, assume that $\tilde{T}$ is a transvection with direction $u$ in the image of the Gassner representation such that $\widetilde{T} V_1 \subseteq V_2$. If for any $v_1 \in V_1$, $v_1 + \xi(v_1)u \in V_2$, then in particular, $u \in V_1 + V_2$. This means that $V_1 + V_2$ is a $\widetilde{T}$-invariant subspace. Because $\widetilde{T}$ must act on  $\{V_1, \ldots, V_k\}$  by a permutation, we conclude that $\widetilde{T}(V_2)=V_1$. But this implies that $\widetilde{T}$ is an element of even order, while in reality its order is $p$ -- the characteristic of $\mathbf{F}_q$, which is assumed to be odd.  \par 
		Now pick one of the spaces in this decomposition, say $V_i$, does not contain the vector $v$ from proposition \ref{next_transvect}. We can pick two unipotent transformations $\widetilde{T}_1, \widetilde{T}_2$, which, when written in the form (\ref{extension}), have the same $\alpha_2, \ldots, \alpha_n$, but different $\lambda$'s, with the first $\lambda$ nonzero. Pick a nonzero vector $w$ in $V_i$. Then if $w$ has nonzero coordinate in the last vector $\varepsilon_n$, we have $\widetilde{T}_1(w)-\widetilde{T}_2(w) \in V_i$, but at the same time $\widetilde{T}_1(w)-\widetilde{T}_2(w) \in V_i$ is a nonzero multiple of $v$, which is impossible. On the other hand, if $w$ has zero coordinate in $\varepsilon_n$, then $\widetilde{T}_1(w)-w$ is both in $V_i$ and a nonzero multiple of $v$, which is again impossible. \par 
	\end{proof}
	We obtain the desired surjectivity from a theorem of Zalesskii:
	\begin{theorem} \cite{zalesskii}
		Suppose $G \subset \GL(n,q)$ is an irreducible group generated by transvections, and suppose that $n>2$ and $q$ is odd. Then $G$ is conjugate in $\GL(n, k)$ to one of the groups $\SL(n,q')$, $\Sp(n,q')$ or $\SU(n,q')$, where $q'$ divides $q$.
	\end{theorem}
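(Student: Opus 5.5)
The statement is Zalesskii's classification theorem, which the paper imports from \cite{zalesskii} rather than proves. If I had to reconstruct it, I would follow the standard route for irreducible linear groups generated by transvections.

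\emph{Step 1 (root subgroups).} Write every transvection in $G$ as $t = 1 + u\otimes\varphi$, where $u$ is the center, $\ker\varphi$ is the axis, and $\varphi(u)=0$. For a fixed pair (center, axis), the transvections in $G$ with that pair form an additive subgroup $A_{u,\varphi}\subset \mathbf{F}_q$.

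\emph{Step 2 (rank-two pieces).} Consider two transvections $t_1=1+u_1\otimes\varphi_1$ and $t_2=1+u_2\otimes\varphi_2$.
\begin{itemize}
\item If $\varphi_1(u_2)\varphi_2(u_1)=0$, then $\langle t_1,t_2\rangle$ is unipotent: either abelian or of Heisenberg type.
\item Otherwise $\langle t_1,t_2\rangle$ acts on $\langle u_1,u_2\rangle$ and is trivial on $\ker\varphi_1\cap\ker\varphi_2$. It is then a subgroup of $\SL_2$ generated by two transvections.
\end{itemize}
Dickson's theorem identifies the second case, for $q$ odd, as $\SL_2(\mathbf{F}_p(\lambda))$ with $\lambda=\varphi_1(u_2)\varphi_2(u_1)$, up to a small list of exceptions such as $\SL_2(5)\subset\SL_2(9)$. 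Using these rank-two subgroups, conjugating transvections into one another enlarges each $A_{u,\varphi}$. I expect to show that every nonzero $A_{u,\varphi}$ is a one-dimensional vector space over a common subfield $\mathbf{F}_{q'}$. To compare the fields attached to different pairs, I would use irreducibility together with $n>2$: the graph on centers, with an edge whenever $\varphi_1(u_2)\varphi_2(u_1)\neq 0$, is connected, since otherwise the span of a component would be $G$-invariant.

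\emph{Step 3 (the invariant form).} Let $\Omega$ be the set of occurring pairs (center, axis).
\begin{itemize}
\item If some center occurs with two distinct axes, or some axis with two distinct centers, I would show, using Step 2 inside three-dimensional sections, that all pairs occur. Then $G=\SL(n,q')$ after conjugation.
\item Otherwise the assignment $u\mapsto\varphi_u$ is well defined. I would extend it to a $\sigma$-semilinear bijection $V\to V^*$, giving a nondegenerate sesquilinear form $f(x,y)=\varphi_x(y)$. The condition $\varphi_u(u)=0$ and the compatibility forced by conjugation show that $f$ is reflexive. If $\sigma=1$, $f$ is alternating (a symmetric form cannot occur, since orthogonal groups in odd characteristic contain no transvections), which gives $\Sp(n,q')$. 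If $\sigma$ is an involution, $f$ is hermitian, which gives $\SU(n,q')$.
\end{itemize}
In each case, $G$ is then contained in the named group over $\mathbf{F}_{q'}$, after a change of basis defined over $\mathbf{F}_q$.

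\emph{Step 4 (equality).} $G$ contains every root subgroup of the identified classical group, because the occurring pairs form a single orbit that spans $V$ and each root subgroup is full by Step 2. These root subgroups generate the whole group.

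The main obstacle is Step 2, namely proving that the $A_{u,\varphi}$ are full $\mathbf{F}_{q'}$-lines with a single field $q'$. This is where the Dickson exceptions and the small-field cases, especially $q=3,9$, have to be excluded or shown not to affect the result, and it is where $q$ odd and $n>2$ are really used. I would first try to absorb the exceptions by passing to a three-dimensional section containing a third transvection, which already generates more than $\SL_2(5)$.
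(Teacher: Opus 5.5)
The paper gives no proof of this statement. It is quoted from Zalesskii (with Serezhkin), and within the paper the citation is the whole justification. Your outline does not replace it, because the steps that carry the content are stated as intentions, and one justification you do give is wrong.

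In Step 2 you claim that the graph with an edge when $\varphi_1(u_2)\varphi_2(u_1)\neq 0$ is connected, since otherwise the span of a component would be $G$-invariant. This fails. If $t_1$ lies in a component $C$ and $t_2$ does not, you may have $\varphi_1(u_2)=0\neq\varphi_2(u_1)$, and then $t_2(u_1)=u_1+\varphi_2(u_1)u_2$ leaves the span of the centers of $C$. What irreducibility gives directly is connectivity of the \emph{non-commuting} graph, with an edge when $\varphi_1(u_2)\neq 0$ or $\varphi_2(u_1)\neq 0$. Across two elementwise commuting sets one has $\varphi_1(u_2)=\varphi_2(u_1)=0$. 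So the span of the centers of one set is $G$-invariant, hence equal to $V$, which forces the functionals of the other set to vanish. Getting from there to $\SL_2$-type edges needs a further argument about the mixed (Heisenberg) pairs. For instance, their commutator is again a transvection of $G$, with center $u_2$ and axis $\ker\varphi_1$.

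The more serious gap is in Steps 3 and 4, which is where the theorem actually lives. The assignment center $\mapsto$ axis is defined only on the centers that occur. In general this is a proper subset of $\mathbf{P}(V)$: only $\mathbf{F}_{q'}$-rational points, and in the unitary case only isotropic ones. So ``extending it to a $\sigma$-semilinear bijection $V\to V^*$'' is not a formal step. You first have to recognize the geometry carried by the centers: all points of a projective space over $\mathbf{F}_{q'}$, or the singular points of a symplectic or hermitian space over it. You also have to build the $\mathbf{F}_{q'}$-structure $V_0$ with $G\subseteq\SL(V_0)$. Only then does a fundamental-theorem-type argument produce $\sigma$ and $f$. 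The same is needed for your claim that ``all pairs occur'' in the linear case.

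Step 4 rests on the same missing closure property. The claim that ``the occurring pairs form a single orbit that spans $V$'' is not justified, since the transvection subgroups of $G$ are not a priori one conjugacy class. It is also not sufficient, since a $G$-invariant spanning set of points need not consist of all (isotropic) $\mathbf{F}_{q'}$-points.

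Finally, by your own account you leave open the existence of a single field $\mathbf{F}_{q'}$ over which every root group is a line. The same goes for excluding Dickson's exception: in $\SL_2(5)\subset\SL_2(9)$ the root group is only an $\mathbf{F}_3$-line, although $\mathbf{F}_3(\lambda)=\mathbf{F}_9$.

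So the proposal lists the right ingredients, namely root groups, Dickson's lemma and a center--axis correlation. It does not prove any of the substantive steps of the classification. The inductive or geometric recognition argument supplied by proofs of Zalesskii--Serezhkin type is exactly what is missing, and in this paper the correct move is the citation.
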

	\begin{theorem} \label{next_image}
		Under the assumptions of the proposition \ref{next_transvect}, the image of $\bar{g}_n(t_0, \ldots, t_{n+1})$ contains $\SU(h)$.
	\end{theorem}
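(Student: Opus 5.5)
Let $\Gamma$ be the image of $\bar{g}_{n+1}(t_0, \ldots, t_{n+1})$ and $G \subseteq \Gamma$ the subgroup generated by all transvections lying in $\Gamma$. The strategy is to show that $G$ is big enough for Zalesskii's theorem to force $G \supseteq \SU(h)$ (in the linear case, $\SL$ on each of the two dual factors, which is $\SU(h)$ there).

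By Proposition \ref{next_transvect}, $\Gamma$ contains every unitary transvection along the isotropic vector $v$. Since $\Gamma$ is normalized by itself, it then contains every unitary transvection along $\gamma v$ for each $\gamma\in\Gamma$. So $G$ is a normal subgroup of $\Gamma$ and is nontrivial.

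\textbf{Step 1: $G$ is irreducible.} Let $U$ be the span of the orbit $\Gamma v$. It is $\Gamma$-invariant and nonzero. By Proposition \ref{irreducibility}, $U$ is everything, because the hypotheses $t_0\cdots t_n=1$ and $t_{n+1}\neq 1$ give $t_0\cdots t_{n+1}\neq 1$. By Lemma \ref{invaddsubgp}, this holds even over $\mathbf{F}_p$. Now use that $G$ is normal in the irreducible group $\Gamma$. By Clifford's theorem the space splits as a direct sum of $G$-isotypic pieces, and $\Gamma$ permutes these pieces. Lemma \ref{next_prim} says $\Gamma$ is primitive, so there is a single piece and the space is $G$-isotypic. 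A transvection along $v$ has rank-one difference from the identity. In a direct sum of $m$ copies of an irreducible $W$, such an element would act diagonally, giving rank at least $m$ times something, so $m=1$. Hence $G$ is irreducible. The same reasoning applies to each $\mathbf{F}_q$-factor in the linear case, where a unitary transvection is a pair of ordinary transvections.

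\textbf{Step 2: apply Zalesskii.} The group $G$ is irreducible and generated by transvections. The dimension is $n>2$ and $q$ is odd. Zalesskii's theorem therefore makes $G$ conjugate to $\SL(n,q')$, $\Sp(n,q')$ or $\SU(n,q')$ with $q'\mid q$. Since $G$ preserves $h$ and lies in $\SU(h)$, the case $\SL(n,q)$ over the full field is excluded in the unitary case. Comparing orders, $\Sp$ is excluded because $\Sp$ preserves a nondegenerate alternating form, while $G$ lies in a unitary group over $\mathbf{F}_{q^2}$ with irreducible natural module. A subfield $q'<q$ is excluded by Lemma \ref{invaddsubgp}: $G$ contains all transvections along $v$, whose parameters range over the full set of imaginary scalars of $\mathbf{F}_{q^2}$. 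Since these are not contained in any proper subfield model, the field generated by the traces is all of $\mathbf{F}_{q^2}$. This leaves $G=\SU(h)$. In the linear case we get $\SL(n,q)$ on one factor, acting compatibly and dually on the other, which is exactly $\SU(h)$ over $\mathbf{E}_q$.

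\textbf{Main obstacle.} The hardest point is ruling out the smaller groups in Zalesskii's list, namely symplectic groups and subfield groups. I would first try a counting argument on transvections with a fixed centre. In $\SU(n,q)$ there are exactly $q$ transvections along an isotropic vector, counting the identity. We have all of these, and this number is too many for $\SU(n,q')$ with $q'<q$, and incompatible with the transvection structure of $\Sp(n,q')$ embedded in $\GL(n,q^2)$.
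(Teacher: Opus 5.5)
\textbf{Step 1 is fine and differs from the paper.} The paper proves irreducibility of the transvection subgroup $H$ geometrically. Each good isotropic vector lies in, or is orthogonal to, every $H$-invariant subspace, so a minimal invariant subspace is nondegenerate. This gives an orthogonal decomposition into $H$-irreducibles that the monodromy permutes, and Lemma~\ref{next_prim} leaves one summand. You instead apply Clifford's theorem to $G\trianglelefteq\Gamma$ and use Lemma~\ref{next_prim} to get a single isotypic component $W^{\oplus m}$. Then $\operatorname{rank}(g-1)=m\cdot\operatorname{rank}(g|_W-1)$ for a transvection $g$ forces $m=1$. This is shorter and never uses the form; both routes depend equally on Lemma~\ref{next_prim}.

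\textbf{The gap is in Step 2: your exclusion of the symplectic case is false.} When the dimension $n$ is even, $\Sp_n(q)$ is an irreducible subgroup of $\SU_n(q)$. Take a symplectic form $\omega$ on $\mathbf{F}_q^n$, extend it to $\mathbf{F}_{q^2}^n$, pick $\epsilon\neq 0$ with $\bar\epsilon=-\epsilon$, and set $h'(x,y)=\epsilon\,\omega(x,\bar y)$, where the bar is the coordinatewise $q$-th power.
\begin{itemize}
\item $h'$ is a nondegenerate hermitian form preserved by $\Sp(\omega)(\mathbf{F}_q)$, and this group is absolutely irreducible and generated by transvections.
\item For every $u\in\mathbf{F}_q^n\setminus 0$, its transvections $x\mapsto x+c\,\omega(x,u)u$ with $c\in\mathbf{F}_q$ are exactly the $q$ unitary transvections $x\mapsto x+a\,h'(x,u)u$ with $\bar a=-a$.
\end{itemize}
So none of your tools separates $G$ from a conjugate of $\Sp_n(q)$: not ``$\Sp$ preserves an alternating form'', not comparing orders, and not your count of $q$ transvections per centre. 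The same example refutes the claim that having all imaginary parameters forces trace field $\mathbf{F}_{q^2}$; here it is $\mathbf{F}_q$. The linear case has the same problem with $\Sp_n(q)\le\SL_n(q)$.

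Your count does correctly remove subfield groups whose root subgroups have order $<q$. The group $\SL_n(q)$ over $\mathbf{F}_q\subset\mathbf{F}_{q^2}$ still needs its own remark: for $n\geq 3$ its natural module is not self-dual, so it preserves no nondegenerate hermitian form.

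\textbf{How to close it.} No property of $G$ alone can do this; you must use $\Gamma$.
\begin{itemize}
\item If $G$ were conjugate to $\Sp_n(q)$, it would preserve an alternating form $\omega'$, unique up to scalar by absolute irreducibility.
\item Since $\Gamma$ normalizes $G$, every $\gamma\in\Gamma$ would be a similitude: $\omega'(\gamma x,\gamma y)=\mu_\gamma\,\omega'(x,y)$.
\item The eigenvalue multiset of a similitude is stable under $\alpha\mapsto\mu/\alpha$. So for $n\geq 3$, a pseudo-reflection (eigenvalue $1$ with multiplicity $n-1$, plus one eigenvalue $\delta$) that is a similitude has $\mu=1$ and $\delta=\pm1$.
\item But $\Gamma$ contains pseudo-reflections whose determinant is a nontrivial $l$-th root of unity, and $-1\notin\mu_l$. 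For example, $s_i^2$ has determinant $t_it_{i+1}$ by Lemma~\ref{explicit}. If all these are $1$, reorder the strands as in the paper's final proof to get determinant $t_it_j$. These cannot all be $1$, since $t_0\neq1$ and the $t_i$ have odd order.
\end{itemize}
The paper's own proof skips this point too; it goes straight from irreducibility of $H$ to ``$H=\SU(h)$ by Zalesskii's theorem''. Your Step~2, however, asserts exclusions that are actually false, so as written it does not prove the statement.
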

	\begin{proof}
			Let us say that an isotropic vector $u$ is \emph{good}, if all the unitary transvections with direction $u$ are in the image of $\bar{g}_n(t_0, \ldots, t_{n+1})$ . Clearly, the action of $P_{n+2}$ preserves the set of good isotropic vectors. By proposition \ref{next_transvect}, there exist a good isotropic vector. Because $\bar{g}_n(t_0, \ldots, t_{n+1})$ is irreducible, the $P_{n+2}$-orbit of a good isotropic vector spans the whole space. Let $H$ be a group generated by all the unitary transvections in the image of $\bar{g}_n(t_0, \ldots, t_{n+1})$. It suffices to prove that $H$ is irreducible, for then $H=\SU(h)$ by Zalesskii's theorem. \par 
			Let $W$ be a proper nonzero $H$-invariant subspace. Then for any good isotropic vector $w$, $W$ is invariant with respect to any unitary transvection with direction $w$. I claim that $w$ is either contained in $W$ or is orthogonal to it. Indeed, assume that $w \not \in W$. Then for any unitary transvection $T_w(\xi)$ with direction $w$, and for any $u \in U$, $h(u,w) w = T_w(\xi)(u)-u \in W$, hence $h(u,w)=0$. So, if $W$ is any $H$-invariant subspace, then the set of good isotropic vectors is contained in $W \cup W^{\perp}$.  \par 
			Let $U_1$ be a minimal proper nonzero $H$-invariant subspace. Then $U_1 \cap U_1^{\perp}$ is also $H$-invariant, so by minimality either $U_1 \cap U_1^{\perp} = 0$, and $U_1$ is nondegenerate, or $U_1 \cap U_1^{\perp} = U_1$, and $U_1$ is isotropic. I claim that the second case is actually impossible. Indeed, by the previous paragraph, the set of good isotropic vectors will be contained in $U_1 \cup U_1^{\perp} = U_1^{\perp}$. This contradicts the fact that isotropic vectors span the whole space.
			\par 
			Given that $U_1$ is nondegenerate, pick $U_2 \subset U_1^{\perp}$ minimal $H$-invariant, and continue in this fashion. We will get an orthogonal decomposition $U_1 \oplus \ldots \oplus U_k$, where all the subspaces $U_i$ are $H$-irreducible. Note that the set of good isotropic vectors is contained in $U_i$. I claim that $G$ preserves this decomposition setwise. Indeed, pick an $i = 1, \ldots, k$, and pick a good isotropic vector $u \in U_i$. For any $g \in G$, $gu$ is a good isotropic vector, hence there is a $j$ such that $gu \in U_j$. Because $gU_i \cap U_j$ is nonzero and $H$-invariant, we must have $gU_i = U_j$. We conclude from lemma \ref{next_prim} that $k=1$ and $H$ was acting irreducibly to begin with.
	\end{proof}
	\section{Finishing the proof}
	Before finishing the proof, we deduce another elementary lemma from Zalesskii's theorem:
	\begin{lemma} \label{hyperplane}
		Let $F$ be a field of odd characteristic, let $n > 2$, let $E$ be a quadratic extension of $F$, let $V$ be an $n$-dimensional nondegenerate hermitian space over $E$, and let $V_0 \subseteq V$ be a nondegenerate hyperplane. Then any irreducible subgroup of $\SU(V)$ containing $\SU(V_0)$, is $\SU(V)$ itself.
	\end{lemma}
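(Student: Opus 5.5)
We plan to reduce the statement to Zalesskii's theorem by exhibiting enough unitary transvections inside the given group. Let $G \subseteq \SU(V)$ be irreducible and suppose $\SU(V_0) \subseteq G$. Since $V_0$ is nondegenerate, $V = V_0 \oplus V_0^{\perp}$, with $V_0^{\perp}$ a nondegenerate line, and $\SU(V_0)$ acts on $V$ by fixing $V_0^{\perp}$ pointwise.

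First we record that $\SU(V_0)$ contains all unitary transvections $T_u(\xi)$, $x \mapsto x + \xi h(x,u)u$ with $u \in V_0$ isotropic and $\xi$ imaginary. This uses $\dim V_0 = n-1 \geq 2$ and odd characteristic: nonzero isotropic vectors exist in any nondegenerate hermitian space of dimension $\geq 2$ over a finite field, and unitary transvections have determinant $1$.

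Let $H \subseteq G$ be the subgroup generated by all unitary transvections lying in $G$. It is normal in $G$, and it contains $\SU(V_0)$, since $\SU(V_0)$ is generated by its transvections (classical for $n-1\ge 2$; for $n-1=2$ one uses $\SU(2,q)\cong\SL(2,q)$, which is generated by transvections).

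We then show $H$ is irreducible, by the same argument as in Theorem \ref{next_image}. Let $W$ be a proper nonzero $H$-invariant subspace. By the computation in that proof, every isotropic $u$ whose transvections lie in $H$ satisfies $u \in W$ or $u \perp W$. The isotropic vectors of $V_0$ span $V_0$, and $\SU(V_0)$ acts irreducibly on $V_0$ (it is absolutely irreducible for $n-1 \geq 2$). Hence $W \cap V_0$ is either $0$ or $V_0$, and similarly for projections. This forces $W$ to be one of $V_0$, $V_0^{\perp}$, or another subspace meeting the two $\SU(V_0)$-isotypic components, which are nonisomorphic as $\SU(V_0)$-modules because $V_0$ is nontrivial of dimension $\geq 2$ and $V_0^\perp$ is trivial. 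So $W \in \{V_0, V_0^{\perp}\}$.

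Since $G$ is irreducible, some $g\in G$ satisfies $gV_0 \neq V_0$. Then $gV_0$ is a nondegenerate hyperplane different from $V_0$. Since $n>2$, $gV_0 \cap V_0$ has dimension $n-2 \ge 1$ and is nondegenerate or has a small radical, and in either case contains an isotropic vector $u$ with $gV_0\cap V_0\not\subseteq$ a single line. The transvections along isotropic vectors of $gV_0$ lie in $g\SU(V_0)g^{-1} \subseteq H$. Choosing $u \in gV_0$ isotropic but not in $V_0$ and not orthogonal to $V_0$ — possible because isotropic vectors span $gV_0 \not\subseteq V_0$, and $gV_0\neq V_0^{\perp\perp}$ — we find that neither $V_0$ nor $V_0^{\perp}$ is invariant under $T_u$. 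Hence $H$ is irreducible.

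Finally, $H$ is an irreducible subgroup of $\GL(V)$ generated by transvections with $n>2$ and $q$ odd, so Zalesskii's theorem applies: $H$ is conjugate to $\SL(n,q')$, $\Sp(n,q')$ or $\SU(n,q')$. Since $H \subseteq \SU(V)$ and contains $\SU(V_0)$, whose order already involves the full field $E$ (its elements generate $E$ over the prime field via traces), we get $q'=q$ and the unitary type. The $\SL$ and $\Sp$ options are excluded by order and containment in $\SU(V)$. Thus $H=\SU(V)$, and so $G=\SU(V)$.

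The main obstacle is this last identification step: ruling out $\Sp(n,q')$ and subfield versions $\SU(n,q')$ with $q'<q$. We would handle it by noting that $H$ contains $\SU(V_0)$, whose trace field is the full $E$ and which is not symplectic-type. A cleaner alternative is counting: $|\SU(n,q')| < |\SU(n,q)|$, and $\Sp(n,q')$ cannot contain a copy of $\SU(n-1,q)$ for order reasons.
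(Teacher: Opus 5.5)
Your proposal is correct and follows essentially the paper's argument. The paper also passes to a subgroup $H$ generated by unitary transvections (there, generated by the groups $\SU(u^{\perp})$ over all non-isotropic $u$ with $\SU(u^{\perp})\subseteq G$, which include your conjugates $\SU(gV_0)$), shows that a proper $H$-invariant subspace would have to equal $\langle u\rangle$ or $u^{\perp}$ for two non-proportional such $u$ (impossible when $n>2$), and then applies Zalesskii's theorem.

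The paper never spells out why Zalesskii's list forces $H=\SU(V)$. When you do, use the divisibilities $|\SU(n-1,q)|\mid|H|\mid|\SU(n,q)|$ rather than the trace field: for $n=3$ the subgroup $\SU(V_0)\cong\SL(2,q)$ has all its traces in $F$, so only the orders rule out $\SL(3,q)$ and $\SU(3,q^{1/2})$.
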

	\begin{proof}
		Indeed, let $G$ be such a subgroup, containing the unitary group of a hyperplane $v^{\perp}$ for some $v$. Let us say that a non-isotropic vector $u$ is \emph{good}, if $G$ contains the unitary group of $u^{\perp}$. Clearly, $G$ acts on the set of good vectors, so they have to span the whole space, because $G$ is irreducible. Let $H$ be a group generated by all the $\SU(u^{\perp})$ for $u$ good. By Zalesskii's theorem, it suffices to prove that $H$ is irreducible. Let $U$ be an $H$-invariant subspace. For any good vector $u$, $U$ has to be $\SU(u^{\perp})$-invariant, which means that $U$ is either $\langle u \rangle $ or $u^{\perp}$. But of course, for any other good vector $u'$, $U$ would have to be either either $\langle u' \rangle $ or $u'^{\perp}$. This is only possible if $n=2$ and $u,u'$ form an orthogonal basis.
	\end{proof} 
	\begin{theorem} 
		\begin{enumerate}
			\item Let $\bar{g}_n(t_0, \ldots, t_n)$ be a specialized reduced Gassner representation over $\mathbf{E}_q$ such that $t_0 \ldots t_n \neq 1$, and there exist a proper subsequence of $(t_0, \ldots, t_n)$ of length $\geqslant 3$ with product $1$, which does not contain $1$'s. Then the image of $\bar{g}_n(t_0, \ldots, t_n)$ contains $\SU(h)$;
			\item Let $\bar{g}_n(t_0, \ldots, t_n)$ be a specialized reduced Gassner representation over $\mathbf{E}_q$ such that all $t_i \in \mu_d$, $d \geq 3$, $t_i \neq 1$ and $n \geqslant d$. Then the image of $\bar{g}_n(t_0, \ldots, t_n)$ contains $\SU(h)$.
		\end{enumerate}
	\end{theorem}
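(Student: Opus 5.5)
Part (1) goes by induction on the number of strands, starting from Theorem \ref{next_image} and adding one strand at a time with Lemma \ref{hyperplane}. Part (2) reduces to part (1) by a pigeonhole argument.

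\textbf{Part (1).} Since the strands can be relabelled by braid conjugation, we may assume the proper subsequence with product $1$ is an initial segment $(t_0,\ldots,t_m)$, with $m\ge 2$ and $m<n$. It has no $1$'s, so $1-t_0$ and $1-t_{m+1}$ are units. Hence Theorem \ref{next_image} applies to $\bar{g}_{m+1}(t_0,\ldots,t_{m+1})$: its image contains $\SU(h)$ of that space, provided the dimension hypothesis ($m>2$, and $q$ odd) holds. The boundary case $m=2$ has to be handled separately, or absorbed by reordering to lengthen the segment.

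Now suppose the image of $\bar{g}_{k}(t_0,\ldots,t_k)$ contains its $\SU$, for some $k$ with $m+1\le k<n$. Use the nested embeddings of Gassner representations noted after Proposition \ref{nontriviality1}, applied to the sub-braid group on the first $k+1$ strands. These give a hyperplane $V_0\subset V=\bar{g}_{k+1}(t_0,\ldots,t_{k+1})$ carrying $\bar{g}_k$, and $V_0$ is preserved by that subgroup.

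The induction step requires the following checks.
\begin{itemize}
\item $V_0$ must be nondegenerate. This holds when $t_0\cdots t_k\neq1$. If instead some intermediate product equals $1$, apply Theorem \ref{next_image} again at that step instead of Lemma \ref{hyperplane}.
\item $V$ must be irreducible. This holds by Proposition \ref{irreducibility} whenever $t_0\cdots t_{k+1}\ne 1$; in particular it holds at the final step $k+1=n$, where the hypothesis $t_0\cdots t_n\neq1$ is used.
\item The image of $P_{k+2}$ consists of determinant-$\mu_l$ unitary elements. Its intersection with $\SU(V)$, or the group it generates together with $\SU(V_0)$, is still irreducible, because $\SU(V_0)$ together with a single non-$V_0$-preserving element already generates an irreducible group.
\end{itemize}
With these in place, Lemma \ref{hyperplane} gives $\SU(V)\subseteq$ image, and the induction finishes at $k+1=n$.

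In the linear case, Lemma \ref{hyperplane} must be replaced by its $\SL$ analogue, again proved from Zalesskii's theorem. The extra issue there is the graph-of-isomorphism possibility from Section \ref{degen}, which must be excluded by the non-isomorphism of the two projected representations.

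\textbf{Part (2).} Given $n+1\ge d+1$ elements of $\mu_d\setminus\{1\}$, consider the partial products $t_0,\,t_0t_1,\,\ldots$. Among $d+1$ partial products two coincide by pigeonhole, so some consecutive block has product $1$. After reordering via braid conjugation, this block becomes a proper initial segment. It has length $\ge 2$, since no $t_i=1$. The length must in fact be $\ge3$: a block of length $2$ has the form $t,t^{-1}$, and we then try to enlarge it, or choose a different block, using $n\ge d$. If $t_0\cdots t_n=1$, the statement must be read for the irreducible quotient, and we would drop one strand and argue there.

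The main obstacle I expect is the bookkeeping in the inductive step: ensuring the intermediate hyperplanes stay nondegenerate, and handling the small-length and linear-case exceptions. I would first try to organize the order of strands so that the only vanishing partial product is the chosen initial block.
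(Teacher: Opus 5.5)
\textbf{Part (1).} This follows the paper's route: Theorem~\ref{next_image} for a degenerate initial block, then Lemma~\ref{hyperplane} one strand at a time. The bookkeeping you worry about is settled by one choice the paper makes. After reordering so that a product-one block with $t_0\neq 1$ comes first, take $m$ \emph{maximal} with $t_0\cdots t_m=1$. This gives three things:
\begin{itemize}
\item $m<n$, because $t_0\cdots t_n\neq 1$.
\item $t_{m+1}\neq 1$, by maximality. Your justification (``the block has no $1$'s'') says nothing about $t_{m+1}$, which lies outside the block.
\item Every later partial product is $\neq 1$. So each $\bar{g}_k(t_0,\ldots,t_k)$ with $k>m$ is irreducible and contains the previous one as a nondegenerate hyperplane.
\end{itemize}
Your other plan, ordering the strands so that the chosen block is the only vanishing partial product, is not always possible: if the entries outside the block are $-1,-1,-1$, some later partial product equals $1$ in every order. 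Restarting with Theorem~\ref{next_image}, which you also suggest, is equivalent to taking $m$ maximal. The other caveats you raise are not treated in the paper's proof either: a block of length $3$ versus the hypothesis $n>2$ of Proposition~\ref{next_transvect}, the linear case, and $t_0\cdots t_n=1$ in (2).

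\textbf{Part (2): the genuine gap.} A single coincidence among partial products only gives a product-one block of length $\geq 2$. The whole content of (2) is upgrading this to length $\geq 3$. Saying ``try to enlarge it, or choose a different block, using $n\ge d$'' is not an argument. The paper's remark shows this is exactly where $n\ge d$ is needed: for $(\zeta,\ldots,\zeta,\zeta^{-1})$ with $n=d-1$, every product-one subsequence has length $2$. Three ideas are missing.
\begin{itemize}
\item[(a)] Include $1$ among the partial products. The $d+2$ elements $1,t_0,t_0t_1,\ldots,t_0\cdots t_d$ of $\mu_d$ then give \emph{two} distinct coincidence pairs $(i,j)$ and $(k,l)$.
\item[(b)] Part (1) needs only a subsequence, not a consecutive block. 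So if both pairs give disjoint blocks of length $2$, their union $(t_{i+1},t_{i+2},t_{k+1},t_{k+2})$ already works.
\item[(c)] In the overlapping case $k=i+1$ you get $t_{i+1}=t_{i+3}$. If you first ordered the $t_i$ so that equal values are consecutive, this forces $t_{i+1}=t_{i+2}=t_{i+3}=-1$, hence $d\ge 4$. Move these three entries to the end. A second pigeonhole on the $d-1>d/2$ elements $1,t_0,\ldots,t_0\cdots t_{d-3}$ in $\mu_d/\{\pm1\}$ gives a block with product $\pm1$ disjoint from them. Complete it with two or three of the $-1$'s.
\end{itemize}
Without the preliminary ordering, (c) breaks down: $(t,t^{-1},t)$ gives two overlapping length-$2$ blocks with no constraint on $t$.
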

	\begin{proof}
		For part 1), fix a reordering of the branch points, such that there exists an $m \leqslant n$ such that $t_0 t_1 \ldots t_m = 1$, and $t_0 \neq 1$. In this reordering, pick a maximal $m$ such that $t_0 t_1 \ldots t_m = 1$. By theorem \ref{next_image}, the image of the subgroup $P_{m+2}$ 
		contains the whole unitary group of the subspace $\langle \varepsilon_0, \ldots, \varepsilon_m \rangle$. We prove by induction that the image of the subgroup $P_{m+2+i}$ contains the whole unitary subgroup of the subspace $\langle \varepsilon_0, \ldots, \varepsilon_{m+i} \rangle$. Assume that we know the theorem for the braid subgroup generated by the first $m+1+i$ strands, $i \geqslant 1$ and we prove it for the braid subgroup generated by the first $m+2+i$ strands. Consider the pure braid groups $P_{m+1+i}$ given by omitting the last strand. It operates on the subset $\varepsilon_0, \ldots, \varepsilon_{m+i}$ of the standard basis for the Gassner representation. By the inductive assumption, the image of $P_{m+1+i}$ will be the whole unitary group of a hyperplane $\langle \varepsilon_0, \ldots, \varepsilon_{m+i-1} \rangle$. Because $m$ was chosen maximal with respect to the property that $t_0 t_1 \ldots t_m = 1$, we have that $t_0 t_1 \ldots t_{m+i+1} \neq 1$, and therefore, $\bar{g}_{m+i+1}(t_0, t_1, \ldots, t_{m+i+1})$ is irreducible. Hence, by lemma \ref{hyperplane}, the image of $\bar{g}_{m+i+1}(t_0, t_1, \ldots, t_{m+i+1})$ contains the whole unitary group of $\langle \varepsilon_0, \ldots, \varepsilon_{m+i+1} \rangle$. \par 
		For part 2), assume that $n \geq d$. Order $t_0, \ldots, t_n$ in such a way that elements equal to a fixed value form a block of consecutive entries. By the pigeonhole principle, among the $d+2$ elements $1, t_0, t_0t_1, \ldots, t_0 \ldots t_d \in \mu_d$, there will be two distinct pairs of indices $i < j$ and $k < l$ such that $t_0 \ldots t_i = t_0 \ldots t_j$ and $t_0 \ldots t_k = t_0 \ldots t_l$. Without loss of generality, assume $k \geqslant i$. Note that in fact $j \neq i+1$ and $l \neq k+1$ because all the entries are assumed to not be $1$. If $j \geq i + 3$, then part 1) applies to the subsequence $(t_{i+1}, \ldots t_j)$. If $l \geq k+3$, then part 1) applies to the subsequence $(t_{k+1}, \ldots t_l)$. \par 
		Assume $j=i+2$ and $l=k+2$, If $k \geq j$, then part 1) applies to the subsequence $(t_{i+1}, t_{i+2}, t_{k+1}, t_{k+2})$. So assume $k < j = i+2$. If $k=i$, then the pairs $(i,j)$ and $(k,l)$ are not distinct, contrary to the assumption. It remains to deal with the most difficult case $k=i+1$.\par 
		In this case we have $t_{i+1}t_{i+2}=1$, $t_{i+2}t_{i+3}=1$. This implies $t_{i+1}=t_{i+3}$, hence, by the assumption on the ordering, $t_{i+2}=t_{i+1}$ also, and therefore $t_{i+1}=-1$. Let us relabel the elements in such a way that $\{t_{i+1}, t_{i+2}, t_{i+3}\}$ become $\{t_{d-2}, t_{d-1}, t_d\}$. Consider the images of the $d-1$ elements $1, t_0, t_0t_1, \ldots, t_0t_1 \ldots t_{d-3}$ in the group $\mu_d / \{\pm 1 \}$. As $d \ge 4$, we have $d-1 > \frac{d}{2}$, so there exist some pair of indices $i' < j'$ such that $t_{i'+1} \ldots t_{j'} = \pm 1$. If $t_{i'+1} \ldots t_{j'} = 1$, then part 1) applies to $(t_{i'+1}, \ldots, t_{j'}, t_{d-1}, t_d)$. If $t_{i'+1} \ldots t_{j'} = -1$, then part 1) applies to $(t_{i'+1}, \ldots, t_{j'}, t_{d-2}, t_{d-1}, t_d)$.
	\end{proof}
	\begin{remark}
		Note that the assumptions in part 2) are optimal, because for the sequence $(\zeta, \zeta, \ldots, \zeta, \zeta^{-1})$, where $\zeta$ appears $d-1$ times, there is no subsequence of length $\geq 2$, for which part 1) applies.
	\end{remark}
	\begin{proof} (of theorem \ref{main})
		We have just proven that the image of the reduced Gassner representation contains either $\SU(n,q)$ or $\SL(n,q)$. To determine the image completely as a subgroup of $\U(n,q)$ or $\GL(n,q)$ we only need to know the values which can be taken by the determinant. By lemma \ref{explicit}, the determinant on the generators takes value $t_it_{i+1}$, which is an $l$-th root of unity. We can always reorder the elements $t_0, \ldots, t_n$ in such a way that, say, $t_0 t_1 \neq 1$, and therefore, the group multiplicatively generated by determinants will be exactly $\mu_l$.
	\end{proof}

\end{document}